\documentclass[12pt]{amsart}

\usepackage{enumerate, amsmath, amsthm, amsfonts, amssymb, xy,  combelow, mathrsfs, graphicx, paralist, fancyvrb, braket, xcolor, verbatim, tikz, tikz-cd, mathtools,enumitem, comment}
\usepackage[bookmarks, colorlinks=true, linkcolor=blue, citecolor=blue, urlcolor=blue]{hyperref}

\setlength{\textwidth}{5.8in}
\setlength{\textheight}{8.1in}
\setlength{\topmargin}{.3in}
\setlength{\oddsidemargin}{.3in}
\setlength{\evensidemargin}{.3in}

\input xy
\xyoption{all}

\DeclareMathOperator{\mor}{\mathsf{Mor}}
\DeclareMathOperator{\quot}{\mathsf{Quot}}

\renewcommand{\tilde}[1]{\widetilde{#1}}

\newtheorem{theorem}{Theorem}
\newtheorem{lemma}{Lemma}
\newtheorem{corollary}{Corollary}
\newtheorem{proposition}{Proposition}
\newtheorem{conjecture}{Conjecture}

\newtheorem{question}{Question}
\newtheorem*{theorem*}{Theorem}
\usepackage{comment}

\theoremstyle{definition}
\newtheorem{remark}{Remark}

\newtheoremstyle{TheoremNum}
        {7pt}{7pt}              %%% space between body and thm
        {\itshape}                      %%% Thm body font
        {}                              %%% Indent amount (empty = no indent)
        {\bfseries}                     %%% Thm head font
        {.}                             %%% Punctuation after thm head
        { }                             %%% Space after thm head
        {\thmname{#1}\thmnote{ \bfseries #3}}%%% Thm head spec
    \theoremstyle{TheoremNum}

\newcommand{\BC}{\mathbb{C}}
\newcommand{\BE}{\mathbb{E}}

\newcommand{\BP}{\mathbb{P}}
\newcommand{\BQ}{\mathbb{Q}}
\newcommand{\BZ}{\mathbb{Z}}

\newcommand{\CE}{\mathcal{E}}
\newcommand{\CF}{\mathcal{F}}

\newcommand{\CO}{\mathcal{O}}

\newcommand{\git}{\mathbin{
		\mathchoice{/\mkern-6mu/}% \displaystyle
		{/\mkern-6mu/}% \textstyle
		{/\mkern-5mu/}% \scriptstyle
		{/\mkern-5mu/}}}% \scriptscriptstyle
\newcommand{\GL}{\mathrm{GL}}

\begin{document}

\parindent=30pt

\baselineskip=17pt

\title [A short way of counting maps]{A short way of counting maps to hypersurfaces in Grassmannians}

\vskip.2in

\author{Alina Marian}
	\address{The Abdus Salam International Centre for Theoretical Physics, Strada Costiera 11, 34151 Trieste \newline \text{ } \qquad Department of Mathematics, Northeastern University, 360 Huntington Avenue, Boston, MA 02115}
	\email{amarian@ictp.it}

\vskip.2in    
    \author{Shubham Sinha}
	\address{The Abdus Salam International Centre for Theoretical Physics, Strada Costiera 11, 34151 Trieste}
	\email{ssinha1@ictp.it}
\date{}
\vskip.2in

\begin{abstract}
Using a Quot scheme compactification, we calculate the virtual count of maps of degree $d$ from a smooth curve of genus $g$  to a hypersurface in a Grassmannian, sending specified points of the curve to special Schubert subvarieties restricted to the hypersurface. We study the question of whether this virtual count is in fact enumerative under suitable conditions on the hypersurface, in the regime when the map degree $d$ is large.

\end{abstract}

\maketitle

\section{Introduction}

Let $C$ be a smooth complex projective curve of genus $g$. Consider the Grassmannian $G(r, n)$ of $r$ planes in $\mathbb C^n$ with tautological sequence
$$0 \to S \to \CO^n \to Q \to 0.$$
Let $$X_{\ell} = Z (s) \subset G (r, n)$$ be a hypersurface cut out by a general section 
\begin{equation}
\label{eqn:s on grass}
s \in H^0 \left ( G (r, n), \, \left (\det S^\vee \right )^{\otimes \ell}\right ).
\end{equation}
In this note we address the problem of enumerating maps $f: C \to X_{\ell}$ subject to incidence conditions with special Schubert subvarieties $\sigma_k, 1 \leq k \leq r,$ at fixed domain points. We will use the Quot compactification $\quot_d(C, X_{\ell})$ of the space $\mor_d(C, X_{\ell})$ of degree $d$ maps from $C$ to $X_\ell$, defined as follows. To start, let $\quot_d(C, G(r, n))$ be the Quot scheme parametrizing short exact sequences 
\begin{equation}
\label{eqn:quotpoint}
0\to E \to \CO_C^{n}\to F\to 0
\end{equation}
on $C$ where $E$ has rank $r$ and degree $-d$. The space $$\mor_d(C, G(r, n)) \subset \quot_d(C, G(r, n))$$ is the open subscheme of exact sequences \eqref{eqn:quotpoint} with locally free quotients. Let 
\begin{equation}
\label{eqn:universalseq}
0 \to \CE \to \CO^n \to \CF \to 0 \, \, \, \text{on} \, \, \, \quot_d(C, G(r, n)) \times C
\end{equation}
be the universal sequence. The section \eqref{eqn:s on grass} can be viewed as a general element of $$\text{Sym}^{\ell}\, V^{\vee}\simeq H^0 (\BP (V), \CO (\ell)), \, \, \text{where} \,\, V = \wedge^r \BC^n.$$
$X_\ell$ is correspondingly the intersection of a general degree $\ell$ hypersurface in $\mathbb P (V)$ with $G(r, n),$ viewed under the Pl\"{u}cker embedding $G(r, n) \hookrightarrow \mathbb P (V).$
Through the universal sequence \eqref{eqn:universalseq}, $s$ induces then a section 
\begin{equation}
\label{eqn: s on quot}
 \CO \xrightarrow{s}  \text{Sym}^\ell \, V^\vee \otimes \CO\to \left (\det \CE^\vee \right )^{\otimes \ell} \, \, \text{on} \, \, \quot_d(C, G(r, n)) \times C
 \end{equation}
of the line bundle $\left (\det \CE^\vee \right )^{\otimes \ell}.$ We define $$\quot_d(C, X_{\ell})\subset \quot_d(C, G (r, n))$$ as the scheme theoretic locus of subsheaves $\{E \subset \CO^n \} 
\in \quot_d (C, G (r, n))$ for which the section \eqref{eqn: s on quot} vanishes at all points of $C.$ By definition, 
\begin{equation}
\mor_d(C, X_{\ell}) \subset \quot_d(C, X_{\ell}). 
\end{equation}
We denote as $\pi, \, \rho$ the projections from the product $\quot_d(C, G(r, n)) \times C$ to the two factors. Whenever 
$$d \ell > 2g-2,$$ the pushforward
\begin{equation}
\BE_{\,\ell} = \pi_* \left ( (\det \CE^\vee)^{\otimes \ell} \right ) \, \, \, \text{on} \, \, \, \quot_d (C, G(r, n))
\end{equation}
is locally free of rank $d \ell - g + 1.$ 
In this case, we have scheme-theoretically
\begin{equation}
\label{eqn:basicinclusion}
\quot_d(C, X_{\ell}) = Z (\tilde{s}) \xhookrightarrow{\iota} \quot_d(C, G (r, n)),
\end{equation}
where  
\begin{equation}
\label{eqn:tildes}
\tilde s \in H^0 \left (\quot_d(C, G(r, n)), \, \BE_{\,\ell} \right )
\end{equation}
is the image of the section \eqref{eqn: s on quot} under $\pi_*$. 

As we will recall in the next section, both $\quot_d(C, G(r, n))$ and, more subtly, $\quot_d(C, X_{\ell})$ are known to admit perfect obstruction theories and virtual classes which are naturally compatible under the inclusion \eqref{eqn:basicinclusion}, satisfying
\begin{equation}\label{eq:Compatibility_virtual_class}
\iota_*[\quot_d(C, X_\ell)]^{vir} = c_{top}(\BE_{\, \ell} ) \cap [\quot_d(C, G(r,n))]^{vir}
\end{equation}
for all $d\ell >2g-2 $.

\vskip.2in

Over the Quot scheme $\quot_d(C, G(r, n)),$ compactifying the space of degree $d$ maps to the Grassmannian $G(r, n)$ itself, an interesting class of virtual intersections is calculated by the Vafa-Intriligator formula. Indeed, for a point $p \in C$, we let $\CE_{p}$ denote the restriction of the rank $r$ universal subsheaf $\CE$ to $\quot_d(C, G(r, n)) \times \{p\}$, and focus on its Chern classes 
\begin{equation}\label{eq:a_i}
    a_i := c_i(\CE_{p}^\vee), \, \, \, 1\leq i \leq r.
\end{equation}
The formula calculates the virtual top intersections of these classes. Set $$ e = d n +r(n-r)(1-g) = \, \text{virtual dim of} \,  \quot_d(C,G(r,n)).$$

\begin{theorem*}[Vafa-Intriligator Formula, cf. \cite{mo,bertram,st}]\label{thm:VI_formula}
	For any monomial \\ $P = \prod_{k=1}^{t}a_{i_k}$, $1\le i_k\le r$ of weighted degree $e=d n +r(n-r)(1-g)$, we have
\begin{equation}
\label{eqn:VI}
    \int_{[\quot_d(C,G(r,n))]^{vir}}P= (-1)^{d(r-1)}\cdot \sum_{\zeta_1,\dots,\zeta_r}\prod_{k=1}^{t} e_{i_k}(\zeta_1,\dots,\zeta_r)J^{g-1}(\zeta_1\dots\zeta_r),
\end{equation}
	with the sum being taken over all $\binom{n}{r}$ $r$-tuples $(\zeta_1,\dots,\zeta_r)$ of distinct $n^{th}$ roots of unity. Here, $e_{i_k}$ is the $i_k$th elementary symmetric polynomial in $r$ variables, and $$ J(x_1,\dots,x_r)= \prod_{i=1}^{r}nx_i^{n-1}\prod_{1\le i\ne j\le r}(x_i-x_j)^{-1}.$$
\end{theorem*}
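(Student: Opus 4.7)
The plan is to establish the formula by combining the deformation invariance of virtual intersection numbers on the Quot scheme, a genus-reduction argument, and an identification of the small quantum cohomology of $G(r, n)$ with a Jacobian ring.

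First I would check that the left-hand side of \eqref{eqn:VI} depends only on the genus $g$ and on $(d, r, n)$, independent of the complex structure on $C$ and of the marked point $p$. This follows by spreading $\quot_d(-, G(r, n))$ out over the moduli of pointed smooth curves of genus $g$ and noting that the relative perfect obstruction theory, the universal subsheaf, and hence the classes $a_i$ all vary in flat families, so the integral is locally constant. It then extends continuously across nodal boundary strata of the Deligne--Mumford compactification, a fact I would establish by the usual base-change arguments for virtual classes.

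Next I would degenerate $C$ to a nodal curve with $g$ nodes and geometric genus zero and apply a gluing formula for relative Quot schemes. The relative Quot scheme over the universal curve carries a compatible virtual class, and the fiber contribution at each node is given by the diagonal class of $G(r, n)$ pushed onto the two branches. The resulting family of integrals is thus encoded in a $2$-dimensional topological field theory on $H^{*}(G(r, n))$ whose pair-of-pants product coincides with the small quantum product, provided the sign $(-1)^{d(r-1)}$ is inserted to reconcile Quot-scheme and Gromov--Witten orientations.

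The third step invokes the Siebert--Tian presentation of the small quantum cohomology of $G(r, n)$ as the Jacobian ring of a polynomial superpotential $W(x_1, \ldots, x_r)$ whose critical points are precisely the unordered $r$-tuples of distinct $n^{\text{th}}$ roots of unity, and whose Hessian at each critical point is essentially $J^{-1}(\zeta_1, \ldots, \zeta_r)$. Since the Frobenius algebra is semisimple at these critical points, a standard TQFT manipulation expresses any genus-$g$ correlator as a sum over critical points of the insertions evaluated on the primitive idempotents, weighted by $J^{g-1}$. The elementary symmetric polynomials $e_{i_k}$ appear because $a_i = c_i(\CE_{p}^{\vee})$ corresponds, under restriction at $p$, to $c_i(S^{\vee})$, whose Chern roots are $x_1, \ldots, x_r$.

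The main technical obstacle, I expect, lies not in any of these structural identifications, each well established in the literature, but in carefully tracing orientation and sign conventions through the chain of comparisons: the birational relationship between the Quot compactification and the Kontsevich moduli space of stable maps contributes the prefactor $(-1)^{d(r-1)}$, and matching this against the signs appearing in the semisimple TQFT expansion is the delicate step. The base case of genus zero, three-point correlators on $\BP^1$ -- where both sides can be computed independently by classical intersection theory on $\quot_d(\BP^1, G(r, n))$ -- provides the normalization that pins down all remaining constants.
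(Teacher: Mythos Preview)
The paper does not actually prove this statement. The Vafa--Intriligator formula is stated as a known result, attributed to \cite{mo,bertram,st}, and is used as input for the paper's own Theorem~1 on hypersurface counts. (Remark~3 in the paper refers to ``the proof of Theorem~\ref{thm:VI_formula},'' but this is a mislabeling: the section it points to, Section~3.1, proves Theorem~1, not the Vafa--Intriligator formula itself.) So there is no proof in the paper to compare your proposal against.

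That said, a word on your outline relative to the cited literature. Your strategy---deformation invariance, nodal degeneration, a TQFT gluing axiom, then the Siebert--Tian Jacobian-ring presentation of quantum cohomology---is in the spirit of \cite{st}, which works on the Gromov--Witten side where gluing under nodal degeneration is well established. The approach of \cite{mo}, by contrast, is a direct torus-localization computation on $\quot_d(C,G(r,n))$ with no degeneration at all; the residue sum over fixed loci produces the sum over $r$-tuples of roots of unity. Bertram's argument \cite{bertram} is a recursive one via boundary strata of the Quot scheme.

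The weakest point of your proposal is the step where you invoke a ``gluing formula for relative Quot schemes'' and claim the node contribution is the diagonal class. Unlike the stable-maps compactification, the Quot compactification does not split cleanly over a nodal curve: a subsheaf of $\mathcal{O}_C^n$ on a nodal $C$ is not simply a pair of subsheaves on the components matched at the node, and no relative Quot theory with a splitting axiom of the Gromov--Witten type is available off the shelf. To make your argument rigorous you would either have to pass first to Gromov--Witten invariants (at which point you must justify the comparison, which is exactly where the sign $(-1)^{d(r-1)}$ and the boundary discrepancy enter---this is nontrivial and is essentially what \cite{mo} circumvents), or develop the degeneration formula for Quot schemes directly, which is substantially more than ``the usual base-change arguments for virtual classes.'' Without one of these, the heart of your reduction to genus zero is a gap.
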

When the degree $d$ is sufficiently large with respect to the genus $g$ of $C$, Bertram \cite{bertram} showed that the above integral is enumerative and counts the number of maps from $C$ to $G(r,n)$ sending $t$ distinct points $p_1, \dots, p_t$ to special Schubert subvarieties of type $\sigma_{i_k}=c_{i_k}(S^{\vee})$, in general position. 

%This enumeration problem was also solved by Siebert and Tian \cite{st} %using the stable map compactification.

\vskip.2in

We now turn to intersections of type $a$ in the subscheme
$\quot_d(C, X_{\ell})$ of virtual dimension $e_{\ell}=e-(d\ell -g+1),$ and establish the following count. The analogous virtual count of maps from $C$ to a complete intersection in $G(r,n)$ is addressed in Theorem~\ref{Thm:VI_formula_complete_intersection} of Section~\ref{sec:complete_intersection}.  

\begin{theorem}\label{thm:VI-type_hypersurface}
	Assume $d \ell >2g-2$. Let $P = \prod_{k=1}^t a_{i_k}, \, \, 1\leq i_k \leq r$ be a monomial of weighted degree $e_{\ell}=e-(d\ell -g+1).$ Then
	\begin{align}\label{eqn:VI_hypersurface}
	\int_{[\quot_d(C,X_{\ell})]^{vir}}^{}P=
    %T_{d,g}(\ell)
    \frac{(n-\ell)^g\cdot \ell^{\,d\ell-g+1}}{n^g}\int_{[\quot_d(C,G(r,n))]^{vir}}^{}a_1^{d\ell-g+1}P.
	\end{align}
	%where
	%\[T_{d,g}(\ell)=\ell^{d\ell-g+1}\bigg(\sum_{i=0}^{d}(-1)^i\binom{g}{i}\ell^iN^{-i} \bigg). \]
    %$    T_{d,g}(\ell)= \ell^{d\ell-g+1}(1-\ell/n)^g.$
	%Note that when $d\ge g$, $T_{d,g}(\ell)= \ell^{d\ell-g+1}(1-\ell/N)^g$. \textcolor{red}{check!}
The integral on the right hand side admits a closed-form evaluation by the Vafa-Intriligator Formula \eqref{eqn:VI}. The intersection product \eqref{eqn:VI_hypersurface} is the virtual count of maps from $C$ to $X_\ell$ sending points $p_k \in C$ to $Y_{i_k} \cap X_\ell$, $1 \leq k \leq t$, where $Y_{i_k}\subset G(r,n)$ is a special Schubert subvariety of type $\sigma_{i_k}.$
\end{theorem}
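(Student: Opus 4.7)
The plan is to use the virtual class compatibility \eqref{eq:Compatibility_virtual_class} to rewrite
$$\int_{[\quot_d(C,X_\ell)]^{vir}} P \;=\; \int_{[\quot_d(C,G(r,n))]^{vir}} c_{top}(\BE_{\,\ell}) \cdot P,$$
and to express $c_{top}(\BE_{\,\ell})$ as a tautological combination amenable to the Vafa-Intriligator formula. The target identity is that, for any polynomial $P$ in $a_1,\ldots,a_r$ of complementary degree,
$$\int_{[\quot_d(C,G(r,n))]^{vir}} c_{top}(\BE_{\,\ell}) \cdot P \;=\; \frac{(n-\ell)^g\,\ell^N}{n^g}\int_{[\quot_d(C,G(r,n))]^{vir}} a_1^N\, P, \qquad N := d\ell - g + 1.$$

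The first concrete step is a Grothendieck-Riemann-Roch computation for $\pi$ applied to $L = \det\CE^\vee$. Using the Künneth decomposition
$$c_1(L) \;=\; \pi^* a_1 + \gamma + d\,\rho^*[pt], \qquad \gamma \in H^1(\quot_d(C,G(r,n)))\otimes H^1(C),$$
together with $\rho^*\mathrm{Td}(T_C) = 1 + (1-g)\rho^*[pt]$, one obtains
$$\mathrm{ch}(\BE_{\,\ell}) \;=\; e^{\ell a_1}\!\left(N + \tfrac{1}{2}\ell^2\,\theta\right), \qquad \theta := \pi_*(\gamma^2)\in H^2(\quot_d(C,G(r,n))).$$
Converting Chern character to total Chern class via the Newton identities yields
$$c(\BE_{\,\ell}) \;=\; (1+\ell a_1)^N \exp\!\left(\frac{\ell^2\,\theta/2}{1+\ell a_1}\right),$$
whose degree-$2N$ component is
$$c_{top}(\BE_{\,\ell}) \;=\; \ell^N\sum_{m\geq 0}\frac{\binom{N-m}{m}}{m!\,2^m}\,\theta^m\,a_1^{N-2m}.$$

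The decisive step is evaluating the mixed integrals $\int \theta^m a_1^{N-2m}\, P$. The key structural observation is that $\gamma$ is pulled back along the determinantal map $\delta:\quot_d(C,G(r,n))\to \mathrm{Pic}^{-d}(C),\ [E]\mapsto[\det E]$, from the $H^1\otimes H^1$ component of the Poincaré line bundle; consequently $\theta = -2\delta^*\Theta$, where $\Theta$ is the theta divisor class on the Jacobian, and in particular $\theta^{g+1} = 0$, so the sum above truncates at $m = g$. The main obstacle is verifying the combinatorial identity that collapses the truncated series of binomials into the clean factor $(n-\ell)^g/n^g$. I would tackle it via an extension of the Vafa-Intriligator residue evaluation to $\theta$-insertions: at each $r$-tuple $\zeta$ of $n$-th roots of unity appearing in \eqref{eqn:VI}, the class $\theta$ contributes a specific scalar controlled by $\zeta$ (determined by the Poincaré formula $\Theta^g/g! = [\mathrm{pt}]$ on $\mathrm{Pic}^{-d}(C)$ together with the fiberwise geometry of $\delta$), and a binomial identity in $m$ collapses the $\zeta$-local contribution to $(1-\ell/n)^g\,e_1(\zeta)^N$ times the Vafa-Intriligator weight $J(\zeta)^{g-1}$. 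Substituting into \eqref{eqn:VI} for $\int a_1^N P$ then produces the closed form.

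Finally, enumerativity in the large-$d$ regime follows by Bertram's argument for the Grassmannian case: for $d\gg 0$ the morphism locus $\mor_d(C,X_\ell)$ is dense in $\quot_d(C,X_\ell)$, and the cycles $Y_{i_k}\cap X_\ell$, chosen in general position, meet the image of a generic morphism transversally in $X_\ell$, so the virtual count agrees with the enumerative count.
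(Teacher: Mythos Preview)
Your overall strategy matches the paper: reduce via \eqref{eq:Compatibility_virtual_class} to $\int c_{top}(\BE_\ell)\cdot P$ on the Grassmannian Quot scheme, compute $c_{top}(\BE_\ell)$ by GRR in terms of $a_1$ and the odd K\"unneth classes, then convert the odd-class insertions back to $a_1$-insertions. However, your execution has two problems. First, the displayed formula for $c_{top}(\BE_\ell)$ is wrong: since $\theta\in H^2$ has the same degree as $a_1$, the degree-$N$ piece of $(1+\ell a_1)^N\exp\bigl(\tfrac{\ell^2\theta/2}{1+\ell a_1}\bigr)$ is $\sum_m \tfrac{1}{m!}(\ell^2\theta/2)^m(\ell a_1)^{N-m}$, i.e.\ $(\ell a_1)^N\exp(\ell\theta/(2a_1))$; your exponent $a_1^{N-2m}$ and the factor $\binom{N-m}{m}$ are spurious. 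In the paper's notation $\theta=-2\phi$ with $\phi=\sum_j b_1^j b_1^{j+g}$, and one gets $c_{top}(\BE_\ell)=(\ell a_1)^N e^{-\ell\phi/a_1}$ (Lemma~\ref{lem:Euler_class_calculation}).

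Second, and more seriously, the ``decisive step'' is left undone. You correctly observe that $\phi=\delta^*\Theta$ is pulled back from the Jacobian, but the projection formula only gives $\int_{\quot}\phi^m Q=\int_{\mathrm{Jac}}\Theta^m\cdot\delta_*Q$, and you would still need to identify the pushforward $\delta_*(a_1^{N-m}P)\in H^{2(g-m)}(\mathrm{Jac})$; this is not determined by the Poincar\'e relation $\Theta^g/g!=[\mathrm{pt}]$ alone, since $H^{2(g-m)}(\mathrm{Jac})$ is far from one-dimensional. The paper instead invokes a known localization result of Marian--Oprea (Proposition~\ref{prop:intersection-b-classes}): for any monomial $P$ in the $a$-classes,
\[
\int_{[\quot_d(C,G(r,n))]^{vir}}\phi^s P=\frac{g!}{(g-s)!\,n^s}\int_{[\quot_d(C,G(r,n))]^{vir}}a_1^s P.
\]
Plugging this into the exponential and summing $\sum_{s=0}^g\binom{g}{s}(-\ell/n)^s=(1-\ell/n)^g$ gives \eqref{eqn:VI_hypersurface} immediately. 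Your Jacobian sketch does not supply a substitute for this input, so the proof has a genuine gap at exactly the point where the factor $(n-\ell)^g/n^g$ is supposed to appear. (Your final enumerativity paragraph is also out of place: that is the content of Theorem~\ref{thm:enumerative}, it requires the extra hypothesis that $\mor_d(C,X_\ell)$ has expected dimension, and is not part of the present statement.)
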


%The following are some specialization and consequences of %Theorem~\ref{thm:VI-type_hypersurface}. 
% When $X_\ell\subset \BP^{r} = G(r,r+1)$ is a degree $\ell$ hypersurface
%\begin{align}\label{eq:Projective_hypersurface_virtual}
 %   \int_{[\quot_d(C,X_\ell)]^{vir}}\prod_{k=1}^t a_{i_k} = \ell^{\ell %d- g+1}(r+1-\ell)^{g},
%\end{align}
%where $1\le i_{k}\le r$, $i_1+\cdots + i_t$ equals the expected %dimension $e_{\ell}=(r+1-\ell)d-(r-1)(g-1)$ and $d>2g-2$.
%According to Theorem~\ref{thm:enumerative}, the virtual invariants for %the hypersurface $X_{\ell} \subset \BP^r$ in %\eqref{eq:Projective_hypersurface_virtual} are enumerative when $i_k < %n - \ell$.

\begin{remark}
For hypersurfaces $X_{\ell}\subset \BP^r$, by specializing \eqref{eqn:VI_hypersurface}, the virtual count of maps from $C$ to $X_{\ell}$ sending $t=e_\ell$ points to $t$ hyperplanes in $X_{\ell}$, is obtained:
 \begin{equation}\label{eq:intro_VI_hypersurface_P^r}
      \int_{[\quot_d(C,\BP^r)]^{vir}} a_{1}^{e_{\ell}} = \ell^{d\ell - g + 1} (r + 1 - \ell)^g.
 \end{equation}
 Here the expected dimension is given by $e_{\ell} = d(r+1 - \ell) + (1 - g)(r - 1)$.
 \end{remark}
 \begin{remark}
 The Lagrangian Grassmannian $LG(2,4)\subset G(2,4),$ parametrizing Lagrangian subspaces in a symplectic vector space $\BC^{4},$ is cut out by a section of $\det S^\vee$. Through Theorem \ref{thm:VI-type_hypersurface}, the virtual intersection numbers have the simple formulas 
\[
\int_{[\quot_d(C,LG(2,4))]^{vir}} a_{1}^{m_1}a_2^{m_2} = 2^{2d-m_2-g+1} \cdot 3^g,
\]
where $m_1+2m_2=3(d-g+1)$ and $d>2g-2$.

More generally, the symplectic Grassmannian $SG(2,n) \subset G(2,n),$ parametrizing rank two isotropic subspaces in a symplectic vector space $\BC^{n}$, is cut out by a section of $\det S^\vee$. In this case, $\quot_d(C,SG(2,n))$ is precisely the Isotropic Quot scheme considered in \cite{si}, which parametrizes rank two isotropic subsheaves of the trivial bundle $\CO^{n}$ equipped with a symplectic form. Our method provides a short proof of \cite[Theorem~1.3]{si} in the setting $d > 2g - 2$.
\end{remark}

\begin{remark}
The proof of Theorem~\ref{thm:VI_formula} uses the virtual class compatibility \eqref{eq:Compatibility_virtual_class} along with the explicit evaluation of a large class of virtual top intersections on $\quot_d(C,G(r,n))$ extending beyond the Vafa-Intriligator formula \eqref{eqn:VI}. This proof will be given in Section~\ref{sec:proof_of_VI_formula}.
\end{remark}

\begin{remark}\label{rem:Tevelev_degree}
Theorem \ref{thm:VI-type_hypersurface} fits in the broad study of quasimap invariants by Ciocan-Fontanine and Kim (cf. \cite{cfk1, cfk2, cfk3}, see also \cite{mop}), in the simpler setting of a fixed domain curve. Related fixed-domain virtual counts were explored in a stable map setting in recent papers on Tevelev degrees (cf. \cite{{bp}, {cela}, {celalian},{fl}}), as follows. Consider the moduli space $\overline{\mathcal{M}}_{g,t}(X, \beta)$ of stable maps to a smooth projective variety $X$ and the morphism
$
\tau:\overline{\mathcal{M}}_{g,t}(X, \beta) \to \overline{\mathcal{M}}_{g,t} \times X^t,
$
remembering the domain and evaluating at the $t$ points.
When the expected fiber dimension of $\tau$ is zero,
the (virtual) Tevelev degree $\mathsf {Tev}_{g, n, \beta}$ is defined by 
$$\tau_* [\overline{\mathcal{M}}_{g,t}(X , \beta)]^{vir} = \mathsf {Tev}_{g, n, \beta}\,  [\overline{\mathcal{M}}_{g,t} \times X^t]$$
and gives the virtual count of maps from a fixed domain curve to $X$ with prescribed outputs at $t$ domain points. When $X$ is a Grassmannian, initial calculations of Tevelev degrees recovered particular cases of the Vafa-Intriligator formula \eqref{eqn:VI}; further computations for Fano hypersurfaces and complete intersections in projective space were carried out in \cite {{bp}, {cela}}.  Theorem \ref{thm:VI-type_hypersurface} calculates virtual map counts to a hypersurface in any Grassmannian, using the Quot compactification. The incidence conditions are at special Schubert subvarieties, not points, in the target. 

\end{remark}

\vskip.1in
We now examine the enumerativity of the virtual invariants of Theorem \ref{thm:VI-type_hypersurface}.
This hinges on the basic question whether the space of maps from $C$ to a general hypersurface in $G(r, n)$ has expected dimension for sufficiently large degree $d$. We expect 
\begin{conjecture}
\label{conjecture:expected dim}
For a general hypersurface $X_{\ell}$ in $G(r,n)$ with $\ell < n,$ there exists a threshold degree $d_0(g, r, n)$ so that the space of maps $\mathrm{Mor}_d(C, X_\ell)$ has expected dimension for all $ d > d_0.$
\end{conjecture}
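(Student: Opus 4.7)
I would approach Conjecture~\ref{conjecture:expected dim} via deformation theory combined with the incidence-variety construction used elsewhere in the paper. For any $f \in \mathrm{Mor}_d(C, X_\ell)$, the tangent and obstruction spaces are $H^0(C, f^*T_{X_\ell})$ and $H^1(C, f^*T_{X_\ell})$, so the local dimension at $f$ is automatically at least $\chi(f^*T_{X_\ell}) = e_\ell$. The task is thus to establish the matching upper bound for $X_\ell$ general and $d$ large.

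The key observation is the normal bundle sequence
\[ 0 \to f^*T_{X_\ell} \to f^*T_{G(r,n)} \to f^*(\det S^\vee)^{\otimes \ell} \to 0 \]
on $C$. Since $d\ell > 2g-2$ forces $H^1(C, f^*(\det S^\vee)^{\otimes \ell}) = 0$, whenever $f$ is \emph{unobstructed} as a map to the Grassmannian (meaning $H^1(C, f^*T_{G(r,n)}) = 0$), the long exact sequence yields $H^1(C, f^*T_{X_\ell}) = 0$ as well, and $\mathrm{Mor}_d(C, X_\ell)$ is smooth of the expected dimension $e_\ell$ at $f$. By Bertram's theorem cited in the paper, for $d \geq d_1(g, r, n)$ the space $\mathrm{Mor}_d(C, G(r,n))$ is irreducible of dimension $e$ and its unobstructed locus is open and dense.

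To pass from a pointwise smoothness statement to a global dimension bound, I would form the incidence variety
\[ \mathcal{I} = \{(f, s) \in \mathrm{Mor}_d(C, G(r,n)) \times \mathcal{H} : f(C) \subset Z(s)\}, \qquad \mathcal{H} = H^0\!\left(G(r,n), (\det S^\vee)^{\otimes \ell}\right), \]
with its two projections. Because $\mathcal{I}$ is cut out in $\mathrm{Mor}_d(C, G(r,n)) \times \mathcal{H}$ by a section of a bundle of rank $d\ell - g + 1$, every component of $\mathcal{I}$ has dimension at least $\dim \mathcal{H} + e_\ell$. On the open subset $\mathcal{I}^{\mathrm{free}} \subset \mathcal{I}$ where $f$ is unobstructed in $G(r,n)$, the tangent-space analysis shows that $\mathcal{I}$ is smooth of dimension exactly $\dim \mathcal{H} + e_\ell$ and that the projection to $\mathcal{H}$ is submersive. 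Hence its image is open in $\mathcal{H}$, and over any $s$ in this image the corresponding component of $\mathrm{Mor}_d(C, X_s)$ has dimension exactly $e_\ell$.

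The main obstacles are two-fold: (a) producing a \emph{seed} pair $(f_0, s_0) \in \mathcal{I}^{\mathrm{free}}$ for $d \geq d_0$, and (b) ruling out excess-dimensional components of $\mathrm{Mor}_d(C, X_\ell)$ lying entirely in the obstructed locus over a general $X_\ell$. For (a), since $\ell < n$ the hypersurface $X_\ell$ is Fano, and Mori's bend-and-break together with Koll\'ar's existence results for free rational curves on Fano varieties suggest a construction: begin with a free rational curve in $X_{s_0}$ for some $s_0 \in \mathcal{H}$, attach it as a comb to a fixed genus-$g$ backbone, and smooth to obtain an unobstructed $f_0 : C \to X_{s_0}$ of the prescribed degree $d$, which is automatically unobstructed in $G(r,n)$ by the cohomology argument above. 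For (b), one would need the codimension of the obstructed locus in $\mathrm{Mor}_d(C, G(r,n))$ to grow at least like $d\ell - g + 1$ as $d \to \infty$, which goes beyond what Bertram's theorem directly provides and requires a more refined Koll\'ar-style analysis of degenerations of maps $C \to X_\ell$. I expect (b) to be the essential difficulty underlying the conjecture.
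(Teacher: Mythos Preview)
The statement you are addressing is stated in the paper as a \emph{conjecture}; no proof is offered, and the subsequent enumerativity result (Theorem~\ref{thm:enumerative}) is explicitly conditional on it. So there is no ``paper's proof'' to compare against, and you correctly present your write-up as a strategy rather than a proof. That said, one of your key steps is actually wrong, not merely incomplete.

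The error is in your use of the normal bundle sequence. From
\[
0 \to f^*T_{X_\ell} \to f^*T_{G(r,n)} \to f^*(\det S^\vee)^{\otimes \ell} \to 0
\]
the long exact sequence reads
\[
H^0(f^*T_{G(r,n)}) \to H^0\big(f^*(\det S^\vee)^{\otimes \ell}\big) \to H^1(f^*T_{X_\ell}) \to H^1(f^*T_{G(r,n)}) \to H^1\big(f^*(\det S^\vee)^{\otimes \ell}\big).
\]
Assuming $H^1(f^*T_{G(r,n)})=0$ does \emph{not} force $H^1(f^*T_{X_\ell})=0$; rather, $H^1(f^*T_{X_\ell})$ is the cokernel of $H^0(f^*T_{G(r,n)})\to H^0(f^*(\det S^\vee)^{\otimes \ell})$, and there is no reason for that map to be surjective. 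The valid implication is the opposite one: $H^1(f^*T_{X_\ell})=0$ together with $H^1(f^*(\det S^\vee)^{\otimes \ell})=0$ yields $H^1(f^*T_{G(r,n)})=0$. This matters because your submersivity claim for $\mathcal I \to \mathcal H$ on the locus $\mathcal I^{\mathrm{free}}$ unwinds to exactly the surjectivity of $H^0(f^*T_{G(r,n)})\to H^0(f^*(\det S^\vee)^{\otimes \ell})$ (at least onto the image of $\mathcal H$), which you have not established. So the incidence-variety argument, as written, does not prove that a general fibre has dimension $e_\ell$ even on the ``good'' component.

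Your identification of obstacle (b)---excess components supported over the obstructed locus of $\mor_d(C,G(r,n))$---as the heart of the matter is consistent with the paper's viewpoint: this is precisely why the statement is left as a conjecture and why the paper invokes the known weak $g$-convexity results (for $G(r,n)$, low-degree hypersurfaces in $\BP^r$, isotropic Grassmannians, etc.) rather than attempting a general argument.
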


We will show that if the space of maps $\mor_d (C, X_\ell)$ has expected dimension, the boundary of the Quot compactification $\quot_d (C, X_\ell)$ is well-behaved, allowing for an enumerative interpretation of the virtual type $a$ invariants. 

\vskip.1in

\begin{theorem}
\label{thm:enumerative}
Let $X_\ell\subset G(r,n)$ be a hypersurface such that the space of maps $\mor_{d}(C,X_{\ell})$ has expected dimension for all sufficiently large $d$. Then the virtual type $a$ integral
\[
\int_{[\quot_d(C,X_{\ell})]^{vir}}^{}\prod_{k=1}^t a_{i_k}
\]
in Theorem \ref{thm:VI-type_hypersurface} is enumerative for all $d$ sufficiently large, whenever $i_k < n- \ell,$ with $1\leq k \leq t.$
     The virtual integral counts the actual number of maps from $C$ to $X_\ell$ sending $t$ distinct domain points $p_1, \ldots, p_t$ to the intersection of $X_\ell$ with 
    general Schubert subvarieties of type $c_{i_k}(S^\vee),$ for $ 1\leq k \leq t.$

In the case of a complete intersection $X_{\underline{\ell}}$ in $G(r,n)$, of multidegree $\underline{\ell}=(\ell_1,\dots,\ell_u)$, the analogous enumerativity statement holds subject to the condition $i_k\le n-\sum_{j=1}^{u}\ell_j$ for each $1\le k\le t$. 
\end{theorem}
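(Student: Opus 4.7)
The approach is to combine Kleiman transversality on the open stratum $\mor_d(C, X_\ell)$ with a boundary dimension estimate. For the first step, invoking the hypothesis that $\mor_d(C, X_\ell)$ has expected dimension $e_\ell$ for $d \gg 0$, together with the transitivity of the $\GL_n$-action on $G(r,n)$, Kleiman's transversality theorem applied to the evaluation morphisms $\mathrm{ev}_{p_k}: \mor_d(C, X_\ell) \to G(r,n)$ yields that, for general translates $Y_{i_k}$ of the special Schubert varieties of type $c_{i_k}(S^\vee)$, the scheme-theoretic intersection
\[
\mor_d(C, X_\ell) \,\cap\, \bigcap_{k=1}^{t} \mathrm{ev}_{p_k}^{-1}(Y_{i_k})
\]
is reduced, zero-dimensional, and of cardinality equal to the enumerative count asserted by the theorem.

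The heart of the argument is to show that the Schubert cuts do not meet the boundary $\quot_d(C, X_\ell) \setminus \mor_d(C, X_\ell)$. The key structural observation is that $\tilde s$ vanishes automatically at torsion points of $\CO^n/\CE$: indeed, the Pl\"ucker composition $\det \CE_p \to \det \CO^n$ is already zero precisely where $\CE_p \to \BC^n$ fails to be injective. It follows that $E \in \quot_d(C, X_\ell)$ if and only if the saturation $E^{\mathrm{sat}}$ lies in $\quot_{d-d'}(C, X_\ell)$, where $d' = \mathrm{length}(E^{\mathrm{sat}}/E)$. This yields a stratification of the boundary by saturation degree: for $d' \ge 1$, the stratum $\partial_{d'}$ fibers over (the closure of) $\mor_{d-d'}(C, X_\ell)$ with fiber the Quot scheme of length-$d'$ torsion quotients of $E^{\mathrm{sat}}$, of dimension $r d'$. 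Applying the hypothesis inductively to the smaller degree $d - d'$ yields
\[
\dim \partial_{d'} \,\le\, e_\ell(d-d') + rd' \,=\, e_\ell(d) - d'(n-\ell-r).
\]

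Finally, one must check that the Schubert cuts reduce each $\partial_{d'}$ to a negative-dimensional locus. I would further stratify by the subset $S \subset \{p_1, \ldots, p_t\}$ of marked points meeting the torsion support. On the open substratum $S = \emptyset$, the cuts pull back to codimension-$i_k$ conditions on $E^{\mathrm{sat}}$ at $p_k$ by Kleiman, leaving residual dimension $-d'(n-\ell-r)$. On substrata with $S \ne \emptyset$, the map $\BC^{n\,\vee} \to \CE_{p_k}^\vee$ drops rank and the Schubert condition loses codimension in a controlled way, while one simultaneously gains codimension from the requirement that the torsion be supported at the prescribed marked points; a Porteous-style degeneracy analysis combined with Kleiman on the torsion Quot scheme shows that, under the hypothesis $i_k < n - \ell$, the total codimension of the Schubert cuts strictly exceeds $\dim \partial_{d'}$ on every substratum. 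This substratum-level bookkeeping is the main technical obstacle, and the inequality $i_k < n - \ell$ enters here precisely to offset the rank drop of $\CE_{p_k} \to \BC^n$. The same argument, with the bundle $\bigoplus_{j=1}^u \BE_{\,\ell_j}$ in place of $\BE_{\,\ell}$ and $n - \sum_j \ell_j - r$ in place of $n - \ell - r$, proves the complete intersection statement, the condition $i_k \le n - \sum_j \ell_j$ playing the analogous role.
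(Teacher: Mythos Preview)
Your outline has the right architecture: stratify the boundary by torsion length $d'$, observe that saturation sends boundary points of $\quot_d(C,X_\ell)$ into $\mor_{d-d'}(C,X_\ell)$, and cut by general Schubert translates via Kleiman. But the dimension count on the $S=\emptyset$ stratum is a genuine gap. Your residual dimension there is $-d'(n-\ell-r)$, which is negative only when $n>r+\ell$; for a hypersurface in $\BP^r=G(r,r+1)$ with $\ell\ge 2$ it equals $d'(\ell-1)>0$, and your argument as written does not exclude boundary contributions. The point you miss is that on $S=\emptyset$ all $t$ Schubert cuts are pulled back from the base $\mor_{d-d'}(C,X_\ell)$ along $\tau_{d'}$, so the intersection in $\partial_{d'}$ is the $\tau_{d'}$-preimage of the intersection in the base. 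Applying Kleiman \emph{in the base} gives expected dimension $e_\ell(d-d')-e_\ell=-d'(n-\ell)<0$, hence the base locus is empty and so is its preimage, irrespective of the $rd'$-dimensional fiber. Adding the fiber dimension back to a negative expected base dimension is the error.

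The paper runs the entire argument in the base and thereby sidesteps your Porteous bookkeeping. Using Bertram's inclusion $D_k\cap B_m\subset\tilde{\tau}_m^{-1}\bigl(p_k\times C^{(m-1)}\times\mor_{d-m}(C,G(r,n))\bigr)\cup\tau_m^{-1}(D_{m,k})$, one obtains
\[
\quot_d(C,X_\ell)\cap\bigcap_{k}D_k\cap B_m\;\subset\;\bigcup_{|I|=t-m}\tau_m^{-1}\Bigl(\mor_{d-m}(C,X_\ell)\cap\bigcap_{k\in I}D_{m,k}\Bigr).
\]
The hypothesis $i_k\le n-\ell-1$ now enters as a purely combinatorial bound, not via any rank-drop analysis of $\CE_{p_k}\to\BC^n$: dropping at most $m$ of the $i_k$ costs at most $m(n-\ell-1)$ in codimension, so $\sum_{k\in I}i_k\ge e_\ell-m(n-\ell-1)$, which exceeds $\dim\mor_{d-m}(C,X_\ell)=e_\ell-m(n-\ell)$ by $m$. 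Each base locus is therefore empty, and so is its preimage. Finally, you do not address the strata with $d-m$ below the threshold $d_0$; the paper handles these finitely many degrees separately, observing that the dimensions of the corresponding morphism spaces are bounded while the imposed codimension grows linearly in $d$.
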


When $C = \BP^1$, for a general degree $\ell$ hypersurface $X_\ell \subset \mathbb P^r,$ it was shown in \cite{hrs,ry} that whenever $\ell < r-1$, the space of maps $\mor_d (\BP^1, X_\ell)$ is irreducible of the expected dimension for all $d$. Recent developments in arithmetic geometry, beginning with Browning and Vishe \cite{bv}, established irreducibility of $\mor_d(\mathbb{P}^1, X_\ell)$ for arbitrary $X_\ell$ of sufficiently low degree relative to $r$. This was extended to higher genus domain curves in \cite{hase1}. For {\it general} hypersurfaces $X_\ell \subset \mathbb{P}^r$, the bound was significantly improved in \cite[Corollary~7]{hase2} and \cite{saw} to $\ell \le (r+6)/4$.  

In the case of a hypersurface in $\BP^r$, we note the following corollary of Theorems \ref{thm:VI_formula} and \ref{thm:enumerative}, which will be discussed in Section~\ref{sec:hypersurfaces_Pn}.

\begin{corollary}\label{cor:VI_hypersurface_projective_space}
    Let $X_{\ell}\subset \BP^r$ be a hypersurface such that $\mor_d(C,X_{\ell})$ has expected dimension for $d$ sufficiently large. There are exactly $\ell^{d\ell - g + 1} (r + 1 - \ell)^g$ maps from $f:C\to\BP^r$ such that $f(C)\subset X_{\ell}$ and $f$ sends distinct points $p_1,\dots,p_t$ to codimension $i_k$ planes $H_{i_k}\subset \BP^r$ in general position for $1\le k\le t$. Here we assume $\sum_{k=1}^{t} i_k = e_\ell$ and  $1 \le i_k \le r - \ell$ for all $1\le k\le t$.
\end{corollary}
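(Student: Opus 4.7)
I would specialize Theorems~\ref{thm:VI-type_hypersurface} and~\ref{thm:enumerative} to $G(r,n)=G(1,r+1)=\BP^r$. The rank-one tautological subsheaf $S$ leaves $a_1=c_1(\CE_p^\vee)$ as the unique Chern class, which restricts to the hyperplane class on $\BP^r$. A codimension-$i_k$ plane $H_{i_k}\subset\BP^r$ represents $H^{i_k}$, so the incidence condition $f(p_k)\in H_{i_k}$ contributes $a_1^{i_k}$ to the integrand, and the assumption $\sum i_k=e_\ell$ yields the total integrand $a_1^{e_\ell}$. Formula \eqref{eq:intro_VI_hypersurface_P^r}, obtained by evaluating the right-hand side of \eqref{eqn:VI_hypersurface} via the Vafa-Intriligator formula, then gives the virtual count
\[\int_{[\quot_d(C,X_\ell)]^{vir}} a_1^{e_\ell}=\ell^{d\ell-g+1}(r+1-\ell)^g.\]

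To show this virtual count is enumerative, I would first invoke Theorem~\ref{thm:enumerative} with the choice $i_1=\cdots=i_{e_\ell}=1$: the class $a_1^{e_\ell}$ is reinterpreted as $e_\ell$ independent hyperplane conditions at $e_\ell$ distinct marked points $q_1,\ldots,q_{e_\ell}$. The strict inequality $1<n-\ell=r+1-\ell$, which is automatic from $i_k\geq 1$ and $i_k\leq r-\ell$, makes Theorem~\ref{thm:enumerative} directly applicable and identifies the virtual count with the actual number of maps $f:C\to X_\ell$ hitting $e_\ell$ general hyperplane sections of $X_\ell$ at those $e_\ell$ distinct points.

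Finally, I would pass from this distinct-point configuration to the corollary's clustered one by realizing each $H_{i_k}$ as a transverse intersection of $i_k$ general hyperplanes and collapsing the corresponding $i_k$ marked points to a common limit $p_k$ in a flat one-parameter family. The count is preserved under this specialization provided the evaluation morphism $\mathrm{ev}:\mor_d(C,X_\ell)\to X_\ell^{\,t}$ at $(p_1,\ldots,p_t)$ remains generically finite of the expected degree over the specialized target; this is where the hypothesis $i_k\leq r-\ell$ and the expected-dimension assumption on $\mor_d(C,X_\ell)$ combine, since the restriction $H_{i_k}\cap X_\ell$ then has the expected codimension $i_k$ in $X_\ell$.

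The principal difficulty lies in this last step: one must verify that no extra solutions appear in the limit as the points coalesce, i.e., that no component of the map space degenerates to a positive-dimensional family with image wholly contained in some $H_{i_k}\cap X_\ell$. This is controlled by a codimension estimate on such sub-Schubert map loci, analogous to the one used in the proof of Theorem~\ref{thm:enumerative}, which relies precisely on the strict bound $i_k\leq r-\ell$ together with the expected dimension of $\mor_d(C,X_\ell)$. Combining the three steps gives the stated count $\ell^{d\ell-g+1}(r+1-\ell)^g$ for the number of maps realizing the clustered incidence.
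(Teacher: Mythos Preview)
Your computation of the virtual count via $G(1,r+1)$ and formula \eqref{eq:intro_VI_hypersurface_P^r} is correct, and the enumerativity for $e_\ell$ hyperplane conditions at $e_\ell$ distinct points follows from Theorem~\ref{thm:enumerative} as you say. The approach, however, diverges from the paper's at the enumerativity step, and the collapsing argument you propose is not complete.

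The issue is that Theorem~\ref{thm:enumerative} is formulated for monomials $\prod_k a_{i_k}$ in the \emph{Chern classes} of $\CE_p^\vee$ at \emph{distinct} points; in the rank-one setting $G(1,r+1)$ the only such class is $a_1$, so the theorem only directly yields enumerativity for $i_k=1$ throughout. Your plan to collide groups of $i_k$ marked points to a common $p_k$ is not just a formal manipulation: the proof of Theorem~\ref{thm:enumerative} controls the boundary $B_m$ via the inclusion \eqref{eqn:b2}, whose strength depends on the marked points being distinct (each point can absorb at most one unit of torsion support). Under collision you would have to redo that boundary analysis with degeneracy loci $D_k$ representing $a_1^{i_k}$ at a single $p_k$, and re-derive the codimension estimate \eqref{eqn:codimbound}; this can be done, but it amounts to re-proving a variant of Theorem~\ref{thm:enumerative}, not deducing the corollary from it. The flat-family/specialization sketch you give does not address what prevents solutions from migrating to the Quot boundary in the limit, which is exactly the content of that analysis.

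The paper avoids this entirely by switching viewpoints: it takes $\BP^r=G(r,r+1)$, so that the codimension-$i_k$ plane incidence at $p_k$ is literally the Chern class $a_{i_k}=c_{i_k}(\CE_p^\vee)$ of the rank-$r$ subsheaf, and Theorem~\ref{thm:enumerative} applies \emph{directly} to the monomial $\prod_k a_{i_k}$ under the bound $i_k\le r-\ell<n-\ell$. The price is that the virtual integral is no longer immediately \eqref{eq:intro_VI_hypersurface_P^r}; the paper pays it with the duality identity of Proposition~\ref{prop:Duality}, which equates $\int_{[\quot_d(C,X_\ell)]^{vir}}\prod_k a_{i_k}$ on the $G(r,r+1)$ side with $\int_{[\quot_d(C,\tilde X_\ell)]^{vir}}\tilde a_1^{\,e_\ell}$ on the $G(1,r+1)$ side. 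Thus the paper's route trades your collapsing argument for an algebraic duality, and gets a clean one-line citation of Theorem~\ref{thm:enumerative} for enumerativity.
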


\begin{remark}\label{rem:weak_g-convexity}
   It is convenient to introduce a new notion to describe varieties whose associated space of curves of fixed genus has expected dimension.
 Let $X$ be a smooth projective variety and let $\beta \in H_2(X, \mathbb{Z})$ be an effective curve class. We say that the pair $(X, \beta)$ is \textbf{weakly $g$-convex} if there exists a bound $d_0 = d_0(g, X, \beta)$ such that the space
\[
\mathrm{Mor}_{d\beta}(C, X) = \{ f \colon C \to X \mid f_*[C] = d\beta \}
\]
has the expected dimension for any curve $C$ of genus $g$ and all $d \ge d_0$.

We recall that a variety $X$ is said to be \emph{convex} if $H^1(\mathbb{P}^1, f^*T_X) = 0$ for every morphism $f \colon \mathbb{P}^1 \to X$. In particular, if $X$ is a homogeneous variety $G/P$, then $X$ is convex and $\mor_\beta(C,X)$ is moreover irreducible for all $\beta$ (cf. \cite{kp}). If $X$ is convex, then $(X,\beta)$ is weakly $0$-convex for any $\beta$. It is not known however whether convexity implies weak $g$-convexity for $g > 0$. 

\end{remark}

We summarize below the known examples of weak convexity from the literature, studied by various authors using different techniques. In all the examples listed, except the last, $H_2(X, \mathbb{Z})$ has rank one, and there is a natural choice of the homology class $\beta$.

\begin{itemize}
    \item The Grassmannian $G(r, n)$ is weakly $g$-convex for all $g \ge 0$ \cite{bdw}, proven using the irreducibility of the moduli space of stable bundles on $C$.
    
    \item A general hypersurface $X_\ell \subset \mathbb{P}^r$ is weakly $0$-convex for $\ell < r - 1$ (see \cite{hrs, ry}), proven by analyzing the boundary of the moduli space of stable maps.

    \item A general complete intersection $X_{\underline{\ell}}$ in $\BP^r$ is weakly $0$-convex where multidegree $\underline{\ell}=(\ell_1,\dots,\ell_m)$ satisfy $\sum_{i=1}^{m}\ell_i< 2r/3$, see \cite{bk}.
    
    \item Any smooth hypersurface $X_\ell \subset \mathbb{P}^r$ with small $\ell$ is weakly $g$-convex for all $g \ge 0$ (see \cite{bv} and \cite{hase1} for explicit bounds on $\ell$ in the cases $g = 0$ and arbitrary $g$, respectively); these results are obtained via point counts over finite fields. In the genus zero case, analogous results were proved for any complete intersections in $\BP^r$ of low multidegrees in \cite{bvy}.
    
    \item A general hypersurface $X_\ell \subset \mathbb{P}^r$ is weakly $g$-convex for all $g$ when $5\le \ell \le (n+6)/4$ (see \cite[Corollary~7]{hase2} and \cite{saw}); this is done by explicit points count for Fermat hypersurfaces over finite fields using circle method.
    
    \item The Lagrangian and orthogonal Grassmannians $LG(n, 2n)$ and $OG(n, 2n)$ are weakly $g$-convex for all $g$, as shown in \cite{cch, cch2} by analyzing the geometric properties of isotropic Quot schemes.
    \item Two-step flag varieties $FL(r_1, r_2; n)$ are weakly $g$-convex for all $g \ge 0$ and for specific choices of the curve class $\beta$; see \cite{rs} for conditions ensuring the irreducibility of the associated hyper/nested Quot schemes.
\end{itemize}

\begin{remark}
In a stable map context, the asymptotic enumerativity of Tevelev degrees was studied in 
\cite[Theorem 11]{lp} and \cite{royaetal} for $X_\ell \subset \mathbb P^r$, but not for hypersurfaces in a general Grassmannian. For low map degrees, the Tevelev numbers of $\BP^r$ were studied in \cite{lian}. Furthermore, this question is addressed in quasimap context for blowups of projective spaces in \cite{celalian2}. Recently, the enumerativity of virtual Tevelev degrees was established for positive symplectic manifolds and general almost complex structure on the domain in \cite{cd}.

The question regarding the enumerativity of virtual invariants over the isotropic Quot scheme $\quot_d(C,SG(r,n))$, for sufficiently large $d$ with respect to $g$, $r$ and $n$, was also raised in \cite{si}. While the symplectic Grassmannian is convex, it would be interesting to study its weak $g$-convexity in generality.
\end{remark}

\noindent
 {\bf Acknowledgements.} We thank Alessio Cela, Izzet Coskun, Matthew Hase-Liu, Carl Lian, and Dragos Oprea for useful discussions.

\begin{comment}
\begin{theorem}\cite{lp,royaetal}
    Suppose $X_{\ell}$ is a general smooth hypersurface of degree $\ell$ in $\BP^{r}$. Furthermore suppose $2\ell \le r+2$ and $\ell>2$, then 
    \[
    \mathrm{Tev_{g,d,t}^{X_{\ell}}} = (\ell !)^t\ell^{(d-t)\ell-g+1}(r+1-\ell)^{g}.
    \]
\end{theorem}

\begin{question}
Note that the $\sigma_{r-1}\in H^*(X_{\ell}, \BQ)$ represents the class of $\ell$ points on $X_{\ell}\subset \BP^{r}$. Naively, the virtual count of maps describing $\mathrm{Tev_{g,d,t}^{X_{\ell}}} $ should be the virtual integral of $(a_{r-1}/\ell)^t$ over $\quot_d(C,X_\ell)$. Comparing the virtual and the enumerative count when $2\ell \le r+2$ and $\ell>2$, 
\[
\mathrm{Tev_{g,d,t}^{X_{\ell}}}= \bigg(\frac{\ell !}{\ell^\ell}\bigg)^t\cdot \int_{[\quot_d(C,X_\ell)]^{vir}}\bigg(\frac{a_{r-1}}{\ell}\bigg)^{t}.
\]
    Can we explain the multiplicative factor $(\ell!/\ell^\ell)^t$ in the above equality geometrically?
\end{question} 

\end{comment}

\vskip.3in

\section{Virtual Fundamental Class}

\vskip.2in

The virtual fundamental class for the Quot scheme $\quot_d(C,G(r,n))$ was described in \cite{cfka}, \cite{mo}. In this section, we briefly review the perfect obstruction theory of the hypersurface Quot scheme $\quot_d(C,X_{\ell})$, using the \cite{ckm} framework of quasimaps to GIT quotients. Readers interested in explicitly constructing the virtual fundamental class for $\quot_d(C,X_{\ell})$ can also follow the construction in \cite[Section 2]{si}, making the necessary adjustments.

\subsection{Quasimap Space to $X_\ell$}
Consider the vector space $\mathrm{V} = \mathrm{Hom}(\BC^r,\BC^n)$ with the obvious $\mathrm{G} = \GL_r(\BC)$ action. Let $\theta$ be the inverse of the determinant character of $\mathrm{G}$, and let $L_\theta \to V$ be the linearized line bundle. Then $G(r,n) = V \git \mathrm{G} = V /_{\theta} \mathrm{G}$, and $L_\theta \to V$ descends to $\det(S^{\vee})$, as considered in the introduction. 

A section $s \in H^{0}(G(r,n),\det(S^{\vee})^{\otimes \ell})$ lifts to the $\mathrm{G}$-equivariant section $\tilde{s}$ of $L^{\otimes \ell}_{\theta}$. Denote by $W \subset V$ the subspace cut out by $\tilde{s}$; the hypersurface $X_{\ell} \subset G(r,n)$ is then the corresponding GIT quotient $W \git \mathrm{G}$. 

The Quot scheme $\quot_d(C,G(r,n))$ can be identified with the genus $g$ quasimap moduli space to $G(r,n)$ with parameterized component $C$ and no marked points, as described in \cite[Section 7.2]{ckm}. The subscheme $\quot_d(C,X_{\ell})$ is the quasimap moduli space to $X_{\ell} = W \git \mathrm{G}$ with parameterized component $C$, and consequently, by \cite[Theorem 7.2.2]{ckm}, it admits a perfect obstruction theory, described explicitly as follows (cf \cite[Section 4.5]{ckm}).

\begin{proposition}\label{prop:virtual_class}
    The scheme $\quot_d(C,X_{\ell})$ admits a perfect obstruction theory given by
    $$
        \left(R^\bullet \pi_{*} \left[ R\mathcal{H}om(\CE, \CF) \to \left(\det \CE^{\vee}\right)^{\otimes \ell} \right] \right)^\vee \to \tau_{[-1,0]}\mathbb{L}_{\quot_d(C,X_{\ell})}.
    $$
    Here $\CE$ and $\CF$ are the universal subsheaf and quotient over $ \quot_d(C, X_{\ell})\times C,$ pulled back from $ \quot_d(C, G(r,n))\times C$, and $\pi$ is a projection from the product to $\quot_d(C, X_{\ell})$.
\end{proposition}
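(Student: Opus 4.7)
The plan is to obtain the perfect obstruction theory directly from the quasimap framework of Ciocan-Fontanine-Kim-Maulik. First, I would note that the hypersurface $X_\ell$ has been presented in the preceding subsection as a GIT quotient $W \git \mathrm{G}$, where $W\subset V = \mathrm{Hom}(\BC^r,\BC^n)$ is the $\mathrm{G}$-equivariant zero locus of $\tilde s\in H^0(V,L_\theta^{\otimes\ell})^{\mathrm{G}}$. For generic $s$, Bertini ensures that $W$ is smooth along the $\theta$-semistable locus, so $(W,\mathrm{G},\theta)$ satisfies the hypotheses of \cite[Section 7.2]{ckm}. Hence $\quot_d(C,X_\ell)$, identified with the genus $g$ quasimap moduli space to $X_\ell$ with parameterized component $C$ and no marked points, carries a canonical perfect obstruction theory by \cite[Theorem 7.2.2]{ckm}.

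Next, I would unpack this abstract perfect obstruction theory using the recipe of \cite[Section 4.5]{ckm}, which describes the deformation-obstruction complex of a quasimap $(P,u\colon C\to P\times_{\mathrm{G}}W)$ as the derived pushforward along $\pi$ of a two-term complex modelled on the tangent-normal sequence
$$
T_{W/\mathrm{G}}\longrightarrow T_{V/\mathrm{G}}\xrightarrow{\,d\tilde s\,} L_\theta^{\otimes \ell}.
$$
For the ambient Grassmannian, the quasimap perfect obstruction theory on $\quot_d(C,G(r,n))$ is already known to be $\bigl(R^\bullet\pi_*\,R\mathcal{H}om(\CE,\CF)\bigr)^\vee$ (cf.\ \cite{cfka,mo}), which identifies $R\pi_* T_{V/\mathrm{G}}$ with $R\pi_*R\mathcal{H}om(\CE,\CF)$ after passage through the universal principal bundle. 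Similarly, $L_\theta^{\otimes\ell}$ descends via the universal subsheaf to $(\det \CE^\vee)^{\otimes\ell}$ on $\quot_d(C,G(r,n))\times C$, and under these identifications $d\tilde s$ pulls back to the evaluation-of-$\tilde s$ morphism
$$
R\mathcal{H}om(\CE,\CF)\longrightarrow (\det \CE^\vee)^{\otimes\ell},
$$
whose two-term cone, after $R\pi_*$ and dualization, yields precisely the claimed map to $\tau_{[-1,0]}\mathbb{L}_{\quot_d(C,X_\ell)}$.

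The main obstacle is the bookkeeping required to match the intrinsic CKM construction, which is phrased in terms of the principal $\mathrm{G}$-bundle and the affine target $V$, with the more down-to-earth complex written in the proposition. Concretely, one must (i) verify that the $\mathrm{G}$-equivariant piece coming from deformations of the principal bundle cancels correctly against the $\mathfrak{g}$-contribution in $T_{V/\mathrm{G}}$ so that the residual complex is $R\mathcal{H}om(\CE,\CF)$, and (ii) check that the connecting map $d\tilde s$ in the tangent-normal sequence does descend to the natural morphism into $(\det\CE^\vee)^{\otimes \ell}$ induced by the section \eqref{eqn: s on quot}. Once these identifications are in place, the perfect obstruction theory property and the truncation to $[-1,0]$ follow from \cite[Theorem 4.5.2]{ckm}, and restriction to the locus where $d\ell>2g-2$ recovers the compatibility \eqref{eq:Compatibility_virtual_class} already stated in the introduction.
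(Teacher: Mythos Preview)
Your proposal is correct and follows essentially the same route as the paper. Both invoke \cite[Theorem 7.2.2]{ckm} for existence and then unpack the deformation complex via \cite[Section 4]{ckm}; the only organizational difference is that the paper makes the ``cancellation of the $\mathfrak g$-contribution'' you allude to in (i) explicit by factoring through the forgetful map $\rho\colon \quot_d(C,X_\ell)\to \mathsf{Bun}_{\mathrm G}$, obtaining first the relative complex $\bigl(R^\bullet\pi_*[\mathcal{H}om(\CE,\CO^{\oplus n})\to(\det\CE^\vee)^{\otimes\ell}]\bigr)^\vee$ and then combining with the obstruction theory $\bigl(R^\bullet\pi_*R\mathcal{H}om(\CE,\CE)\bigr)^\vee[-1]$ of the smooth base to reach the stated absolute complex.
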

\begin{proof}
Let $\rho: \quot_d(C,X_{\ell}) \to \mathsf{Bun}_{\mathsf{G}}$ be the morphism sending the subsheaf $E \subset \CO^{\oplus n}_{C}$ to its dual $E^\vee$ in the moduli stack $\mathsf{Bun}_{\mathsf{G}}$ of rank $r$ vector bundles over $C$. An analogue of Theorem 4.2 in \cite{ckm} implies that the relative obstruction theory for $\rho$ is quasi-isomorphic to $\left(R^\bullet \pi_{*} \left( F^{\bullet} \right)\right)^\vee$, where $F^{\bullet}$ is induced by $R^\bullet T_{W} = \left[T_{V}|_{W} \to L_{\theta}^{\otimes \ell} \right]$, and is explicitly given by 
\[
F^{\bullet} = \left[\mathcal{H}om(\CE, \CO^{\oplus n}) \to \left(\det \CE^{\vee}\right)^{\otimes \ell}\right].
\]
Since $\mathsf{Bun}_{\mathsf{G}}$ is a smooth Artin stack with obstruction theory given by $$\left(R^\bullet \pi_{*} R\mathcal{H}om(\CE, \CE) \right)^\vee[-1],$$ the absolute obstruction theory for $\quot_d(C,X_{\ell})$ is quasi-isomorphic to
\[
\left(R^\bullet \pi_{*} \left[ R\mathcal{H}om(\CE, \CF) \to \left(\det \CE^{\vee}\right)^{\otimes \ell}\right] \right)^\vee.
\]
\end{proof}

The virtual fundamental class $[\quot_d(C,X_{\ell})]^{vir} \in A_{e_\ell}(\quot_d(C,X_{\ell}))$ is then constructed using \cite{bf}. Here $e_{\ell} = d(n-\ell) + (1-g)(nr-r^2-1)$ is the virtual dimension of $\quot_d(C,X_{\ell}).$ The compatibility \eqref{eq:Compatibility_virtual_class} of virtual fundamental classes for the two targets $X_\ell$ and $G(r, n)$ follows formally from the compatibility of perfect obstruction theories, see \cite{kkp}. Here, for concreteness, we simply cite the result of \cite[Proposition 6.2.2]{ckm} in our setting.
\begin{proposition}\label{prop:compatibility}
    Assume $R^1 \pi_{*}\left(\det \CE^{\vee}\right)^{\otimes \ell} = 0$. Then 
    \[
    \iota_*[\quot_d(C, X_{\ell})]^{vir} = c_{top}(\BE_{\, \ell} ) \cap [\quot_d(C, G(r,n))]^{vir}.
    \]
    Note that this assumption is satisfied when $d\ell > 2g-2$.
\end{proposition}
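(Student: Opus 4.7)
The plan is to combine the explicit perfect obstruction theories from Proposition \ref{prop:virtual_class} with the general Euler-class principle for virtual classes of zero loci of sections of vector bundles, as formalized in the quasimap theory of \cite{ckm}. First, under the hypothesis $R^1\pi_*(\det\CE^\vee)^{\otimes\ell}=0$, cohomology and base change give that $\BE_\ell = \pi_*(\det\CE^\vee)^{\otimes\ell}$ is a vector bundle of rank $d\ell - g + 1$ on $\quot_d(C,G(r,n))$, and the section $\tilde s$ of \eqref{eqn:tildes} realizes $\iota : \quot_d(C, X_\ell) \hookrightarrow \quot_d(C, G(r,n))$ as its scheme-theoretic zero locus. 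The vanishing is automatic once $d\ell > 2g-2$, since then $H^1(C, L) = 0$ for any line bundle $L$ of degree $d\ell$ on $C$.

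The second and key step is to produce a distinguished triangle of perfect obstruction theories on $\quot_d(C, X_\ell)$,
\[
\iota^* E^\bullet_{G(r,n)} \to E^\bullet_{X_\ell} \to \iota^*\BE_\ell^\vee[1] \xrightarrow{+1},
\]
compatible with the canonical triangle $\iota^*\mathbb{L}_{\quot_d(C,G(r,n))} \to \mathbb{L}_{\quot_d(C,X_\ell)} \to \iota^*\BE_\ell^\vee[1]$ of truncated cotangent complexes arising from the regular-embedding interpretation of $\iota$. The triangle of POTs itself is obtained by applying $R\pi_*$ and dualizing to the natural triangle on $\quot_d(C,G(r,n))\times C$,
\[
(\det\CE^\vee)^{\otimes\ell}[-1] \to \bigl[R\mathcal{H}om(\CE,\CF) \to (\det\CE^\vee)^{\otimes\ell}\bigr] \to R\mathcal{H}om(\CE,\CF) \xrightarrow{+1},
\]
using the hypothesis to identify $R\pi_*(\det\CE^\vee)^{\otimes\ell}$ with $\BE_\ell$ concentrated in degree zero, and flat base change to transport the pushforward along $\iota$. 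Compatibility with the cotangent-complex triangle is built into the quasimap framework recalled in the proof of Proposition \ref{prop:virtual_class}: both Quot schemes are quasimap moduli spaces to the GIT quotients $V\git \mathrm{G}$ and $W\git \mathrm{G}$ with $W = Z(\tilde s) \subset V$, and the triangle above is the relative obstruction triangle for the forgetful morphism to the smooth Artin stack $\mathsf{Bun}_{\mathsf{G}}$.

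With this compatibility in hand, the identity $\iota_*[\quot_d(C, X_\ell)]^{vir} = c_{top}(\BE_\ell) \cap [\quot_d(C, G(r,n))]^{vir}$ follows from the general principle that whenever a closed substack is cut out by a section of a vector bundle and the two POTs fit into such a triangle with the conormal bundle, the virtual class of the substack equals the top Chern class of that bundle capped with the ambient virtual class. This is precisely \cite[Proposition 6.2.2]{ckm}, and a special case of the functoriality results in \cite{kkp} and \cite{bf}. A ranks-in-the-triangle sanity check confirms that $\mathrm{rk}\, E^\bullet_{X_\ell} = \mathrm{rk}\, E^\bullet_{G(r,n)} - \mathrm{rk}\, \BE_\ell$, which matches the expected drop in virtual dimension from $e$ to $e_\ell = e - (d\ell - g + 1)$. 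The main obstacle is the diagram-chase of step two: one must construct the morphism $\iota^* E^\bullet_{G(r,n)} \to E^\bullet_{X_\ell}$ so that it actually commutes with the maps to the cotangent complex. Rather than reproducing this derived-categorical bookkeeping, I would simply invoke the general framework of \cite{ckm}, which is tailored to precisely this situation.
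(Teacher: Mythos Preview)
Your proposal is correct and follows the same route as the paper: both invoke \cite[Proposition~6.2.2]{ckm} (with \cite{kkp} in the background) after observing that the hypothesis makes $\BE_\ell$ a vector bundle and that the two perfect obstruction theories fit into the required compatible triangle. The paper in fact gives no further detail beyond this citation, so your proposal is a fleshed-out version of the same argument.
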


\begin{remark}
    We note finally that the virtual class of $\quot_d(C,X_{\ell})$ and the compatibility \eqref{eq:Compatibility_virtual_class} can also be efficiently obtained from the virtual cycle $[\quot_d(C, G(r,n))]^{vir}$ via Manolache's framework in \cite{m} (see also \cite[Chapter 12]{fu}). We preferred to use the setup of \cite{ckm} in order to identify geometrically the scheme $\quot_d(C,X_{\ell})$ with a quasimap space to $X_\ell$ with fixed domain, improving the zero locus description \eqref{eqn:basicinclusion}.
\end{remark}

%Recall that $X_{\ell}=Z(s)$ cut out by a section $s\in H^{0}(G(r,n),\det \left(S^{\vee}\right)^{\otimes \ell})$. We first note that there is a surjection (not necessarily an isomorphism) 
%$$H^0(G(r,n),\text{Sym}^{\ell}(\wedge^r (\BC^n)^\vee) \otimes \CO_{G(r,n)}) \to H^{0}(G(r,n),\det \left(S^{\vee}\right)^{\otimes \ell})$$
%over $G(r,n)$ since the above map can be regraded as a morphism of $\GL_{n}(\BC)$ representation and the right hand side is an irreducible representation (see Borel-Weil-Bott theorem in \cite{wey}).  Let $\sigma$ be the element of $\text{Sym}^{\ell}(\wedge^r \BC^n)^\vee$ mapping to the section $s$. Let $\mathrm{W}\subset \mathrm{V}$ be the 

\vskip.3in

\section{Virtual intersection numbers}

\vskip.2in

\subsection{The virtual count of Theorem \ref{thm:VI-type_hypersurface}}\label{sec:proof_of_VI_formula}
We will first describe the top Chern class $c_{\text{top}}(\BE_{\, \ell})$ appearing in Proposition~\ref{prop:compatibility} explicitly, to aid in computing the virtual intersection numbers on $\quot_d(C, X_{\ell})$.

Let $\{1,\delta_1,\dots,\delta_{2g},\eta \}$ be a symplectic basis for the cohomology ring $H^*(C,\BZ).$ 
 %satisfying $$\delta_j\delta_k=\begin{cases}
%\eta & k=j+g\\
%-\eta & j=k+g\\
%0& \text{otherwise}
%\end{cases}.$$
Consider the K\"unneth decomposition of the Chern classes of the universal subsheaf $\CE$ on the product $\quot_d(C,G(r,n))\times C,$ 
\begin{equation}\label{eq:Kunneth_decomposition}
c_i(\CE^\vee)= a_i\otimes 1+\sum_{k=1}^{2g}b_i^{k}\otimes \delta_k+ f_i\otimes \eta,
\end{equation}
for $1\le i\le r$. Recall that $a_{i}=c_i\left(\CE_{p}^{\vee} \right)$ in $\eqref{eq:a_i}$.

\begin{lemma}\label{lem:Euler_class_calculation}
Assume $d\ell> 2g-2$. Then $\BE_{\ell}$ is a vector bundle of rank $d\ell-g+1$, and
\begin{equation}
c_{top}\left(\BE_{\ell}\right)= (\ell a_1)^{d\ell-g+1}e^{\frac{-\ell\phi}{a_1}} \in A^{d\ell -g +1} (\quot_d (C, G(r,n)),
\end{equation}
where $\phi = \sum_{j=1}^{g}b_1^jb_1^{j+g}$. The expression on the right is well defined since $\phi ^{g+1}=0$.
\end{lemma}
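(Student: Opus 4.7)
The plan is to compute $\mathrm{ch}(\BE_\ell)$ by Grothendieck--Riemann--Roch and then extract the top Chern class by formal Chern roots. The rank claim is immediate: since $d\ell > 2g-2$, the restriction of $(\det\CE^\vee)^{\otimes\ell}$ to each fiber $\{E\}\times C$ has degree $d\ell > 2g-2$, hence vanishing $H^1$, so cohomology and base change give $\BE_\ell$ locally free of rank $\chi = d\ell - g + 1 =: N$.

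For the Chern character, I apply GRR to the projection $\pi$, using $\mathrm{td}(T_\pi) = 1 + (1-g)\eta$. The task reduces to expanding $e^{\ell c_1(\CE^\vee)}$ via the K\"unneth decomposition \eqref{eq:Kunneth_decomposition}. Set $\gamma := \sum_{k=1}^{2g} b_1^k\otimes\delta_k + f_1\otimes\eta$, so that $c_1(\CE^\vee) = a_1\otimes 1 + \gamma$. A direct Koszul sign calculation (interchanging the odd classes $b_1^k$ past $\delta_k$) gives $\gamma^2 = -2\phi\otimes\eta$, while $\gamma^k = 0$ for $k\geq 3$ since $\eta^2 = 0$ and $\delta_k\eta = 0$. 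Hence $e^{\ell(a_1\otimes 1 + \gamma)}$ truncates to three terms, and multiplying by $\mathrm{td}(T_\pi)$ and extracting the $\otimes\eta$ component (pushing forward along $\pi$) yields
$$\mathrm{ch}(\BE_\ell) = (1 - g + \ell f_1 - \ell^2\phi)\, e^{\ell a_1} = (N - \ell^2\phi)\, e^{\ell a_1},$$
after noting $f_1 = \deg(\CE^\vee|_{\{E\}\times C}) = d$.

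To pass from $\mathrm{ch}$ to $c_{top}$, I use the splitting principle formally. Seeking virtual Chern roots of the shape $x_i = \ell a_1 (1+y_i)$, the identity $\sum_i e^{x_i} = (N - \ell^2\phi)\, e^{\ell a_1}$ determines the power sums: $\sum_i y_i = -\ell\phi/a_1$ and $\sum_i y_i^k = 0$ for $k\geq 2$. Summing $\log(1+y_i)$ then gives $\log\prod_i(1+y_i) = -\ell\phi/a_1$, so
$$c_{top}(\BE_\ell) = \prod_i x_i = (\ell a_1)^N\, e^{-\ell\phi/a_1}.$$
The exponential is a finite sum because $\phi$ is built from anticommuting odd classes $b_1^j b_1^{j+g}$, so any product of $g+1$ such pairs must repeat some $b_1^j$ and $\phi^{g+1}=0$; the resulting class is polynomial in $a_1$ since $N \geq g$ whenever $d\ell \geq 2g-1$.

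The main subtlety will be tracking the Koszul signs in the calculation of $\gamma^2 = -2\phi\otimes\eta$: the graded interchange of odd-degree $b_1^k$ past the odd class $\delta_k$ is what produces the crucial minus sign, ultimately distinguishing $e^{-\ell\phi/a_1}$ from $e^{+\ell\phi/a_1}$. Once this sign is settled, the remainder is standard Chern-character bookkeeping.
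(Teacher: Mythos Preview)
Your proof is correct and follows essentially the same route as the paper: both compute $\mathrm{ch}(\BE_\ell)=(N-\ell^2\phi)e^{\ell a_1}$ via Grothendieck--Riemann--Roch and then convert to the top Chern class. The only difference is in the last step: the paper writes down the full total Chern class $c_t(\BE_\ell)=(1+t\ell a_1)^N\exp\bigl(-\tfrac{t\ell^2\phi}{1+t\ell a_1}\bigr)$ and extracts the coefficient of $t^N$, whereas you go directly to $c_{top}$ via the power-sum/log trick on virtual Chern roots. Your shortcut is slightly slicker, though the formal inversion of $a_1$ could be avoided altogether by writing $x_i=\ell a_1+z_i$ instead of $x_i=\ell a_1(1+y_i)$: then $\sum z_i=-\ell^2\phi$ and $\sum z_i^k=0$ for $k\ge 2$, giving $e_j(z)=(-\ell^2\phi)^j/j!$ and hence $\prod x_i=\sum_j(\ell a_1)^{N-j}(-\ell^2\phi)^j/j!$ without ever leaving the Chow ring.
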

\begin{proof}
Using Serre duality and the condition $d\ell> 2g-2$, we have $$H^1(C,\left(\det E^{\vee}\right)^{\ell}) = H^0(C,\omega_{C}\otimes \left(\det E\right)^{\ell})^\vee = 0$$ for each point $E\to \CO^{\oplus n}$ in $\quot_d(C,G(r,n))$. Thus $\BE_{\ell}=\pi_* \left ( (\det \CE^\vee)^{\otimes \ell} \right )$ is a locally free sheaf of rank $\chi(C,\left(\det E^{\vee}\right)^{\ell})=d\ell-g+1$. 
    
 The first Chern class of the universal subsheaf is
    $$c_1(\det(\CE^\vee)) = c_1(\CE^\vee)= a_1\otimes 1+\sum_{k=1}^{2g}b_1^{k}\otimes \delta_k+ d\otimes \eta,$$
    and consequently the Chern character of the line bundle $\det(\CE^\vee)^{\otimes \ell}$ is
	\begin{align*}
\mathrm{ch}\left(\det(\CE^\vee)^{\otimes \ell}\right) 
&= e^{\ell(a_1\otimes 1+\sum_{k=1}^{2g}b_1^{k}\otimes \delta_k+ d\otimes \eta)}\\&=e^{\ell a_1\otimes 1}(1+\ell\sum_{k=1}^{2g}b_1^{k}\otimes \delta_k + (d \ell -\ell^2\phi)\otimes \eta).
	\end{align*}
where $\phi = \sum_{k=1}^{g}b_1^kb_1^{k+g}$. Using Grothendieck-Riemann-Roch, we obtain 
	\begin{align*}
	\mathrm{ch}(\BE_{\ell})&= \pi_*\big(\mathrm{ch}(\det(\CE^\vee)^{\otimes \ell})\cdot (1-(g-1)\eta)\big)
	\\
	&= e^{\ell a_1}(d\ell-g+1- \ell^2\phi).
	\end{align*}
	  A careful calculation further yields the total Chern class 
\[
c_t(\BE_{\ell}) = (1 + t\ell a_1)^{d\ell - g + 1} e^{-\frac{t\ell^2 \phi}{1 + t\ell a_1}},
\]
where $t$ is a formal variable recording the degree.
 The coefficient of $t^{d\ell-g+1}$ in the above expression is given by 
	\[
	\sum_{i=0}^{d\ell-g+1}(-1)^i(\ell a_1)^{d\ell-g+1-i}\cdot \frac{(\ell^2\phi)^i}{i!}=(\ell a_1)^{d\ell-g+1}e^{\frac{-\ell\phi}{a_1}}.
	\]
    In the above equality, we used the observation that $\phi^{g+1}=0$ and $d\ell-g+1\ge g$.
\end{proof}

We recall the virtual integrals involving the $b$ classes and $a$ classes over the Quot scheme $\quot_d (C, G(r, n))$, explicitly calculated in \cite[Proposition 2]{mo} using torus localization.
\begin{proposition}[\cite{mo}]\label{prop:intersection-b-classes}
For all $s\le d$, and $1\le j_1<\cdots <j_s\le g$, 
\begin{equation}
\int_{[\quot_d(C,G(r, n))]^{vir}}\big(b_1^{j_1}b_1^{j_1+g}\cdots b_1^{j_s}b_1^{j_s+g}\big)P
= \int_{[\quot_d(C,G(r,n))]^{vir}} \frac{a_1^{s}}{n^s}P.
\end{equation}
Here  $P = \prod_{k=1}^t a_{i_k}, \, \, 1\leq i_k \leq r$ is a monomial of weighted degree $e-s$. The expression evaluates to zero if $s>d$ or if superscripts of $b_1$ are repeated. 
\end{proposition}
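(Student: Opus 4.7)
My plan is to prove this by virtual equivariant localization, following the strategy used in \cite{mo}. The diagonal torus $T=(\BC^*)^n$ acts on $\BC^n$ with weights $\lambda_1,\dots,\lambda_n$, inducing a $T$-action on $\quot_d(C,G(r,n))$ compatible with its perfect obstruction theory. A $T$-fixed subsheaf $E\subset \CO_C^n$ necessarily splits as $E=\bigoplus_{i\in I}\CO_C(-D_i)\cdot e_i$ for some $I\subset \{1,\dots,n\}$ of size $r$ and effective divisors $D_i$ of degrees $d_i$ summing to $d$, so the fixed components are $F_{I,\underline{d}}\cong \prod_{i\in I}\mathrm{Sym}^{d_i}(C)$, carrying universal subsheaf $\bigoplus_{i\in I}\CL_i$ with each $\CL_i$ of equivariant weight $\lambda_i$.

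First I would pull back the Künneth pieces to each fixed component. Writing the Künneth decomposition of $c_1(\CL_i^\vee)$ on $\mathrm{Sym}^{d_i}(C)\times C$ as $\lambda_i+\alpha_i\otimes 1+\sum_k\beta_i^k\otimes\delta_k+d_i\otimes\eta$, where $\alpha_i\in H^2(\mathrm{Sym}^{d_i}(C))$ and $\beta_i^k\in H^1(\mathrm{Sym}^{d_i}(C))$ come from the universal divisor, one reads off
\begin{equation*}
a_1|_{F_{I,\underline{d}}}=\sum_{i\in I}(\lambda_i+\alpha_i),\qquad b_1^k|_{F_{I,\underline{d}}}=\sum_{i\in I}\beta_i^k.
\end{equation*}
The Graber--Pandharipande virtual localization formula then rewrites both sides of the claimed identity as sums over $(I,\underline{d})$ of equivariant integrals over $F_{I,\underline{d}}$ against the inverse Euler class of the virtual normal bundle $N^{vir}$, whose $T$-character is determined by the deformation complex $R\pi_*R\CH om(\CE,\CF)$ and the decomposition $\CE=\bigoplus_i\CL_i$.

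The remaining comparison is a universal identity on each fixed component. The key classical input is the Abel--Jacobi map $\mathrm{Sym}^{d_i}(C)\to\mathrm{Jac}(C)$ pulling back the theta class to $\sum_j\beta_i^j\beta_i^{j+g}$, with $\theta^{g+1}=0$ and $\int\theta^g/g!=1$ controlling the Hodge-type integrals. The diagonal terms in the expansion of $\prod_k b_1^{j_k}b_1^{j_k+g}$ on $F_{I,\underline{d}}$ assemble into products of such theta-class powers on each symmetric product factor, and reduce to classical evaluations that match the $a_1^s/n^s$ form.

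The main obstacle I anticipate is the bookkeeping for the off-diagonal cross terms $\beta_i^k\beta_{i'}^{k+g}$ for $i\neq i'$ that arise when restricting the $b$-class product to $F_{I,\underline{d}}$: these must either cancel against contributions from $N^{vir}$ or recombine into pieces captured by symmetric function identities in $\lambda_1,\dots,\lambda_n$. The universal factor $1/n^s$ on the right-hand side should then emerge after regrouping the sum over the $\binom{n}{r}$ choices of $I$, paralleling the combinatorics of the Vafa--Intriligator formula \eqref{eqn:VI}.
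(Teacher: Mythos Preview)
The paper does not itself prove this proposition: it is quoted from \cite[Proposition~2]{mo}, with the single-line remark that it was ``explicitly calculated in \cite[Proposition~2]{mo} using torus localization.'' So there is no in-paper argument to compare against beyond the indication of method.

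Your outlined approach---virtual localization with respect to the diagonal $(\BC^*)^n$-action, identifying fixed loci as products $\prod_{i\in I}\mathrm{Sym}^{d_i}(C)$, and reducing to theta-class integrals on symmetric products via Abel--Jacobi---is exactly the method of \cite{mo} that the paper alludes to. In that sense you are on the right track.

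However, what you have written is a sketch rather than a proof, and you have correctly identified the point where it stops being one. The passage ``The main obstacle I anticipate is the bookkeeping for the off-diagonal cross terms $\beta_i^k\beta_{i'}^{k+g}$ for $i\neq i'$\dots these must either cancel against contributions from $N^{vir}$ or recombine\dots'' is not an argument; it is a hope. In \cite{mo} the resolution is not that these cross terms cancel against the virtual normal bundle. Rather, the contribution of each fixed component is computed completely (including the explicit inverse Euler class of $N^{vir}$ and the full Macdonald-type integrals over the symmetric products), and the identity is then checked at the level of the resulting closed-form expressions in the equivariant weights and theta classes. The factor $1/n^s$ does not arise from regrouping over the $\binom{n}{r}$ index sets $I$ as you suggest; it appears already in the contribution of a single fixed component, from the interaction of the $\beta$-class integrals with the normal bundle Euler class. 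To turn your sketch into a proof you would need to carry out that explicit computation, not merely gesture at a hoped-for cancellation.
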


\begin{proof}[Proof of Theorem~\ref{thm:VI-type_hypersurface}]
The top intersection numbers involving powers of $
\phi$ and a polynomial $P = \prod_{k=1}^{t} a_{i_k}$
are computed using Proposition~\ref{prop:intersection-b-classes}:
\begin{equation}\label{eq:phi_intersection}
\int_{[\quot_d(C,G(r,n))]^{vir}} \phi^{s} P = \int_{[\quot_d(C,G(r,n))]^{vir}} \frac{g!}{(g-s)!} \frac{a_1^{s}}{n^s} P,
\end{equation}
whenever \( s \leq \min\{d,g\} \). The integral is zero otherwise. The coefficient \( \frac{g!}{(g-s)!} \) counts the number of terms of the form $
b_1^{j_1}b_1^{j_1+g} \cdots b_1^{j_s}b_1^{j_s+g},$
satisfying \( 1\leq j_1 < \cdots < j_s \leq g \), in the binomial expansion of \( \phi^s \).

Using the compatibility of virtual fundamental cycles in Proposition~\ref{prop:compatibility} and the explicit formula for the top Chern class of $\BE_\ell$ in Lemma~\ref{lem:Euler_class_calculation}, we write 
\begin{align*}
\int_{[\quot_d(C,X_{\ell})]^{vir}}^{}P&=
\int_{[\quot_d(C,G(r,n))]^{vir}}^{}c_{top}(\BE_{\ell}) \cdot P\\
&= \int_{[\quot_d(C,G(r,n))]^{vir}}^{}(\ell a_1)^{d\ell-g+1}e^{\frac{-\ell \phi}{a_1}} \cdot P.
\end{align*}
Using \eqref{eq:phi_intersection}, the above integral equals
\begin{align*}
%\int_{[\quot_d(C,X_{\ell})]^{vir}}^{}P(a_1,\dots,a_r)&=
\int_{[\quot_d(C,G(r,n))]^{vir}}^{}(\ell a_1)^{d\ell-g+1}\bigg(\sum_{s=0}^{\min\{d,g\}}\frac{(-\ell)^s}{n^s}\binom{g}{s}\bigg) \cdot P.
\end{align*}
Note the condition $d\ge 2g-1$, which implies that $d\ge g$, hence the sum inside the integral becomes $(n-\ell)^g/n^g$. Thus we obtain the required formula \eqref{eqn:VI_hypersurface}.
\end{proof}

\subsection{Hypersurfaces in projective space}\label{sec:hypersurfaces_Pn} 
We now specialize the formula in Theorem~\ref{thm:VI-type_hypersurface} to count maps to a smooth hypersurface $X_\ell \subset \BP^{r}$. Let $e = d(r+1) + (1 - g)r$ and $e_{\ell} = d(r+1 - \ell) + (1 - g)(r - 1)$ denote the expected dimensions of $\quot_{d}(C, \BP^r)$ and $\quot_{d}(C, X_{\ell})$, respectively. Then \begin{align*} 
\int_{[\quot_d(C, X_\ell)]^{vir}} a_1^{e_{\ell}} &= \ell^{d\ell - g + 1} \left( \frac{r+1 - \ell}{r+1} \right)^g \int_{[\quot_d(C, \BP^{r})]^{vir}} a_1^{e} \\ &= \ell^{d\ell - g + 1} (r+1 - \ell)^g. \end{align*} The last equality follows easily from the Vafa–Intriligator formula in \eqref{eqn:VI}. Note that this gives a virtual count of maps from $C$ to $X_{\ell}$ sending $e_{\ell}$ distinct points $p_1, \dots, p_{e_\ell}$ to the intersection of $X_{\ell}$ with general hyperplanes $H_i \subset \BP^{r}$, for $1 \le i \le e_{\ell}$. 

Next we shall relate the virtual invariant for the Grassmannians $ G(r,n)$ and $G(n-r,n)$. Note that the Quot schemes $\quot_d(C, G(r,n))$ and $\quot_d(C, G(n - r, n))$ are not isomorphic outside the case $d=0$. We denote by $X_{\ell}$ and $\tilde{X}_{\ell}$ the same hypersurface viewed in $ G(r,n)$ and $G(n-r,n)$ respectively. The Vafa–Intriligator formula admits the following symmetry:
\begin{proposition}\label{prop:Duality}
    Let $\CE$ and $\tilde{\CE}$ denote the universal subsheaves of ranks $r$ and $n - r$ over $\quot_d(C, X_{\ell})$ and $\quot_d(C, \tilde{X}_{\ell})$ respectively. Then
\[
\int_{[\quot_d(C, X_{\ell}]^{\mathrm{vir}}} \prod_{k=1}^{t} c_{i_k}(\CE_p^{\vee}) = \int_{[\quot_d(C, \tilde{X}_{\ell})]^{\mathrm{vir}}} \prod_{k=1}^{t} s_{i_k}(\tilde{\CE}_p),
\]
where $c_{i_k}$ and $s_{i_k}$ denote the Chern and Segre classes, and $1 \le i_k \le r$.
\end{proposition}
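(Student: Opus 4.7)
The strategy is to evaluate both sides in closed form via the Vafa--Intriligator formula and match them termwise under complementation of subsets of $n$-th roots of unity.

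First, I would apply Theorem~\ref{thm:VI-type_hypersurface} to each side. The prefactor $\frac{(n-\ell)^g \ell^{\,d\ell-g+1}}{n^g}$ is symmetric under $r \leftrightarrow n-r$, so the identity reduces to
\[
\int_{[\quot_d(C,G(r,n))]^{vir}} a_1^{d\ell-g+1}\prod_k c_{i_k}(\CE_p^\vee) \;=\; \int_{[\quot_d(C,G(n-r,n))]^{vir}} \tilde{a}_1^{d\ell-g+1}\prod_k s_{i_k}(\tilde{\CE}_p),
\]
which is well-posed because $r(n-r)=(n-r)r$ makes the two virtual dimensions agree.

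Next I would apply the Vafa--Intriligator formula \eqref{eqn:VI} to each integral, rewriting them as sums over $r$-subsets $A$ (respectively $(n-r)$-subsets $\tilde{A}$) of the set $R$ of $n$-th roots of unity. Complementation $A \mapsto \tilde{A} := R \setminus A$ gives a canonical bijection between the two index sets, and the plan is to show equality on paired summands. Three elementary identities drive this matching: the vanishing $\sum_{\zeta \in R}\zeta = 0$ yielding $\tilde{a}_1(\tilde{A}) = -a_1(A)$; the evaluation $s_i(\tilde{\CE}_p)|_{\tilde{A}} = h_i(\tilde{\zeta}_1,\dots,\tilde{\zeta}_{n-r})$ at the torus-fixed points, which together with the factorization $\prod_{\zeta \in R}(1+t\zeta) = 1-(-t)^n$ relates the factors $e_{i_k}(A)$ to the factors $h_{i_k}(\tilde{A})$; and $\prod_{\zeta \in R,\,\zeta \ne \zeta_0}(\zeta_0 - \zeta) = n\,\zeta_0^{n-1}$, which upon splitting $R = A \sqcup \tilde{A}$ yields a closed-form comparison of the Jacobians $J_r(A)$ and $J_{n-r}(\tilde{A})$.

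The main obstacle will be the sign and power-of-$n$ bookkeeping when these ingredients are assembled. One must reconcile the prefactor $(-1)^{d(r-1)}$ of \eqref{eqn:VI} with its counterpart $(-1)^{d(n-r-1)}$, the additional $(-1)^{d\ell-g+1}$ arising from $\tilde{a}_1^{d\ell-g+1} = (-a_1)^{d\ell-g+1}$, and the monomials in $\prod_{\zeta \in A}\zeta$ and $\prod_{\tilde{\zeta} \in \tilde{A}}\tilde{\zeta}$ hidden in $J_r(A)^{g-1}$ and $J_{n-r}(\tilde{A})^{g-1}$. Once these contributions are carefully tracked, they combine coherently and produce the desired termwise identity, establishing the proposition.
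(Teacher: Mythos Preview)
Your approach is essentially the one the paper takes in its ``alternative'' route: reduce via Theorem~\ref{thm:VI-type_hypersurface} to a Grassmannian identity, then match the two Vafa--Intriligator sums termwise under complementation of subsets of $n$-th roots of unity, using exactly the identities $e_i(\zeta_A)=h_i(-\zeta_{\tilde A})$ and $n\zeta_i^{n-1}=\prod_{j\ne i}(\zeta_i-\zeta_j)$ that you list. The paper streamlines the reduction by first observing $s_1(\tilde{\CE}_p)=c_1(\tilde{\CE}_p^\vee)=\tilde a_1$, so the extra $a_1^{d\ell-g+1}$ and $\tilde a_1^{d\ell-g+1}$ factors get absorbed into the Chern/Segre products and the sign $(-1)^{d\ell-g+1}$ you worry about never appears; it also remarks that for large $d$ one can bypass the computation entirely by invoking Bertram's enumerativity together with the classical identity $c_i(S^\vee)=s_i(\tilde S)$ in $H^*(G(r,n))\cong H^*(G(n-r,n))$.
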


\begin{proof}
Observe that in Theorem~\ref{thm:VI-type_hypersurface}, the constant factor and exponent of $a_1$ on the right hand side of \eqref{eqn:VI_hypersurface} depend only on the integers $n$, $\ell$, $g$, and $d$, and not on $r$. Using the identity $s_1(\tilde{\CE}_p) = c_1(\tilde{\CE}_p^\vee)$, we are reduced to proving the following equality of integrals:
    \[
\int_{[\quot_d(C, G(r,n))]^{\mathrm{vir}}} \prod_{k=1}^{t} c_{i_k}(\CE_p^{\vee}) = \int_{[\quot_d(C, G(n - r, n))]^{\mathrm{vir}}} \prod_{k=1}^{t} s_{i_k}(\tilde{\CE}_p).
\]
Let $S$ and $\tilde{S}$ denote the universal subbundles on $G(r, n)$ and $G(n - r, n)$, respectively. Then the cohomology classes satisfy
\[
c_i(S^\vee) = s_i(\tilde{S}) \quad \text{in } H^{2i}(G(r, n), \mathbb{Z}) \cong H^{2i}(G(n - r, n), \mathbb{Z}).
\]
 When the degree $d$ is sufficiently large, one can invoke the enumerativity of the Vafa–Intriligator formula from \cite{bertram} to complete the proof. Alternatively, the equality can be established by substituting the identities
\begin{align*}
    e_i(\zeta_1,\dots,\zeta_r) &= h_i(-\zeta_{r+1},\dots,-\zeta_n) \quad \text{for all } i \le r,\\
    J(\zeta_1,\dots,\zeta_r) &=(-1)^{r(n-r)} \tilde{J}(\zeta_{r+1},\dots,\zeta_n),
\end{align*}
into \eqref{eqn:VI}, where $\zeta_1, \dots, \zeta_n$ are distinct $n^{\text{th}}$ roots of unity. Here, $e_i$ and $h_i$ denote the elementary and complete homogeneous symmetric polynomials, respectively. To observe the second equality, we use the identity $n\zeta_i^{n-1}= \prod_{j\ne i}(\zeta_i-\zeta_j)$.
\end{proof}
\begin{proof}[Proof of Corollary~\ref{cor:VI_hypersurface_projective_space}]
We now consider $X_\ell$ as a subvariety of $\mathbb{P}^r = G(r, r+1)$. The class of a codimension $i$ plane in $X_\ell$ is given by $a_i = c_i(S^\vee)$. Dually, for $\tilde{X}_\ell \subset \mathbb{P}^r=G(1,r+1)$, the universal bundle $\tilde{\mathcal{E}}_p\to \quot_d(C,\tilde{X}_\ell)$ is a line bundle, and its Segre class satisfies
\[
s_i(\tilde{\mathcal{E}}_p) = c_1(\tilde{\mathcal{E}}_p^\vee)^i = \tilde{a}_1^i.
\]
Proposition~\ref{prop:Duality} along with  \eqref{eq:intro_VI_hypersurface_P^r} implies the virtual count stated in Corollary~\ref{cor:VI_hypersurface_projective_space}
\begin{align}
	\int_{[\quot_d(C,X_{\ell})]^{vir}}^{}\prod_{k=1}^{t}a_{i_k}
    % &=
    % %T_{d,g}(\ell)
    % \int_{[\quot_d(C,\tilde{X}_{\ell})]^{vir}}^{}\prod_{k=1}^{t}a_1^{i_k}\\
    % &
    = \int_{[\quot_d(C,\tilde{X}_{\ell})]^{vir}}^{}\tilde{a}_1^{e_{\ell}}= \ell^{d\ell - g + 1} (r+1 - \ell)^g,
	\end{align}
    where $i_1+\dots+i_t=e_\ell$. The enumerativity of the virtual count follows from Theorem~\ref{thm:enumerative}, which is proved in Section~\ref{sec:Asymptotic _Enumerativity}. 
\end{proof}

\begin{remark}\label{rem:quot_tev_discrepancy}
Let $X_\ell \subset \mathbb{P}^r = G(r, r+1)$ and consider the line $L = H_1 \cap \cdots \cap H_{r-1}$ for general hyperplanes $H_i$. Then $X_\ell$ intersects $L$ in $\ell$ distinct points. Suppose $t = e_\ell / (r - 1)$ is a positive integer. A naive virtual count of degree $d$ maps from a genus $g$ curve $C$ sending $t$ distinct points $p_1, \dots, p_t$ to general points $q_1,\dots,q_
t$ on $X_\ell$ is given by
\[
Q_{g,t}(X_\ell) := \int_{[\quot_d(C, X_\ell)]^{\mathrm{vir}}} \left( \frac{a_{r-1}}{\ell} \right)^t = \ell^{d\ell - g + 1 - t}(r + 1 - \ell)^g.
\]
Comparing this with the virtual Tevelev degree $\mathsf{Tev}_{g,t,d}$ from \cite[Theorem~1.5]{bp} (valid when $3 \le \ell \le r/2 + 1$ and $g + t \ge 2$), we find the relation
\[
Q_{g,t}(X_\ell) = \left( \frac{\ell^\ell}{\ell!} \right)^t \mathsf{Tev}_{g,t,d}(X_\ell).
\]
Note that this discrepancy does not contradict the enumerativity established in Theorem~\ref{thm:enumerative}: the Tevelev degree counts maps sending marked points on $C$ to points in $X_\ell$, whereas Corollary~\ref{cor:VI_hypersurface_projective_space} counts maps sending marked points to planes in $X_\ell$ of codimension at most $r - \ell$. This bound on the codimension is subtle and arises from the analysis of boundary contributions, which is explained in Section~\ref{sec:Asymptotic _Enumerativity}.
\end{remark}

\subsection{Virtual counts of maps to complete intersections in Grassmannians}\label{sec:complete_intersection}

We now briefly describe how to calculate the virtual count of maps from $C$ to complete intersections in $G(r,n)$.  
Let $\underline{\ell} = (\ell_1, \dots, \ell_u)$, and let $X_{\underline{\ell}}$ be a smooth complete intersection in $G(r,n)$, cut out by a general section  
\[
s \in H^0\left(G(r,n), \det(S^\vee)^{\otimes \ell_1} \oplus \cdots \oplus \det(S^\vee)^{\otimes \ell_u} \right).
\]
Assume that $d\ell_i \ge 2g - 2$ for all $1 \le i \le u$. Let $\quot_d(C, X_{\underline{\ell}})$ denote the subscheme of $\quot_d(C, G(r,n))$ defined as the zero locus $Z(\tilde{s})$ of the induced section $\tilde{s}$ of the vector bundle  
\[
\BE_{\underline{\ell}} := \pi_* \left( (\det \CE^\vee)^{\otimes \ell_1} \right) \oplus \cdots \oplus \pi_* \left( (\det \CE^\vee)^{\otimes \ell_u} \right).
\]
The virtual fundamental class of $\quot_d(C, X_{\underline{\ell}})$, and its compatibility with the virtual class of the Quot scheme $\quot_d(C, G(r,n))$, can be described by making obvious modifications to Propositions~\ref{prop:virtual_class} and \ref{prop:compatibility}.

We note that the vector bundle $\BE_{\underline{\ell}}$ has rank $\sum_{i=1}^{u}(d\ell_i - g + 1)$. The calculation in the proof of Lemma~\ref{lem:Euler_class_calculation} immediately implies
\begin{equation}
c_{\mathrm{top}}\left(\BE_{\underline{\ell}}\right) = \prod_{i=1}^{u} (\ell_i a_1)^{d\ell_i - g + 1} e^{\frac{-\ell_i \phi}{a_1}},
\end{equation}
where $\phi = \sum_{j=1}^{g} b_1^j b_1^{j+g}$. Using the compatibility of virtual fundamental classes and the explicit formula for the top Chern class, we obtain, for any monomial $P$ in $a_1, \dots, a_r$ of weighted degree $e_{\underline{\ell}}=e-\sum_{i=1}^{u}(d\ell_i-g+1)$,
\begin{align*}
\int_{[\quot_d(C, X_{\underline{\ell}})]^{\mathrm{vir}}} P &=
\int_{[\quot_d(C, G(r,n))]^{\mathrm{vir}}} c_{\mathrm{top}}(\BE_{\underline{\ell}}) \cdot P \\
&= \int_{[\quot_d(C, G(r,n))]^{\mathrm{vir}}} \prod_{i=1}^{u} (\ell_i a_1)^{d\ell_i - g + 1} e^{\frac{-\ell_i \phi}{a_1}} \cdot P.
\end{align*}
 Expanding the exponential in the last expression and using \eqref{eq:phi_intersection}, the integral becomes
\begin{align*}
\int_{[\quot_d(C, G(r,n))]^{\mathrm{vir}}} \prod_{i=1}^{u} (\ell_i a_1)^{d\ell_i - g + 1}
\left( \sum_{s=0}^{\min\{d,g\}} \frac{(-\sum_{i=1}^u \ell_i)^s}{n^s} \binom{g}{s} \right) \cdot P.
\end{align*}
Note that the condition $d \ge 2g - 1$ implies $d \ge g$, and the sum inside the integral equals $(n-\sum_{i=1}^{u}\ell_i)^{g}/n^g$. We summarize the discussion in the following theorem, which expresses the virtual intersection numbers of $\quot_d(C,X_{\underline{\ell}})$ in terms of the Vafa–Intriligator formula \eqref{eqn:VI}.

\begin{theorem}\label{Thm:VI_formula_complete_intersection}
	Assume $d \ell_i >2g-2$ for all $1\le i\le u$. For any monomial $P = \prod_{k=1}^t a_{i_k}, \, \, 1\leq i_k \leq r$ of weighted degree $e_{\underline{\ell}}=e-\sum_{i=1}^{u}(d\ell_i-g+1)$, we have
	\begin{align*}
	\int_{[\quot_d(C,X_{\underline{\ell}})]^{vir}}^{}P=
    %T_{d,g}(\ell)
    \frac{\left(n-\sum_{i=1}^{u}\ell_i\right)^g\cdot \prod_{i=1}^{u}\ell_i^{\,d\ell_i-g+1}}{n^g}\int_{[\quot_d(C,G(r,n))]^{vir}}^{}\prod_{i=1}^{u}a_1^{d\ell_i-g+1}P.
	\end{align*}   
\end{theorem}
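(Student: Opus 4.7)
The plan is to mirror the proof of Theorem~\ref{thm:VI-type_hypersurface}, modifying only the places where the direct sum structure of the obstruction bundle $\BE_{\underline{\ell}} = \bigoplus_{i=1}^u \pi_*\bigl((\det\CE^\vee)^{\otimes \ell_i}\bigr)$ enters. First I would extend Propositions~\ref{prop:virtual_class} and \ref{prop:compatibility} to the multidegree setting: the quasimap framework of \cite{ckm} applies verbatim to the GIT quotient $X_{\underline{\ell}} = W\git\mathrm{G}$ of the subscheme $W\subset V$ cut out by the $u$ sections, producing a perfect obstruction theory whose relative part replaces the single line bundle $(\det\CE^\vee)^{\otimes\ell}$ by the direct sum. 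The hypothesis $d\ell_i > 2g-2$ for each $i$ gives, by Serre duality, that $R^1\pi_*(\det\CE^\vee)^{\otimes \ell_i} = 0$ summand by summand, so $\BE_{\underline{\ell}}$ is locally free of rank $\sum_i(d\ell_i - g + 1)$ and the compatibility
\[
\iota_*[\quot_d(C, X_{\underline{\ell}})]^{vir} = c_{\mathrm{top}}(\BE_{\underline{\ell}})\cap [\quot_d(C, G(r,n))]^{vir}
\]
holds.

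Next I would compute $c_{\mathrm{top}}(\BE_{\underline{\ell}})$. Multiplicativity of the top Chern class on direct sums, combined with Lemma~\ref{lem:Euler_class_calculation} applied summand by summand, gives
\[
c_{\mathrm{top}}(\BE_{\underline{\ell}}) \;=\; \prod_{i=1}^u (\ell_i a_1)^{d\ell_i - g + 1}\, e^{-\ell_i \phi/a_1}.
\]
The crucial simplification is that the product of exponentials collapses into a single exponential $e^{-L\phi/a_1}$ with $L := \sum_i \ell_i$, so the formal shape of this class matches the output of Lemma~\ref{lem:Euler_class_calculation} with $\ell$ replaced by $L$ in the transcendental factor, leading coefficient $\prod_i \ell_i^{d\ell_i - g + 1}$, and $a_1$-power $N := \sum_i (d\ell_i - g + 1)$.

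Substituting into virtual compatibility and expanding the exponential then rewrites the desired integral as
\[
\bigg(\prod_{i=1}^u \ell_i^{d\ell_i - g + 1}\bigg) \sum_{s\ge 0} \frac{(-L)^s}{s!} \int_{[\quot_d(C, G(r,n))]^{vir}} a_1^{N - s}\, \phi^s\, P.
\]
Proposition~\ref{prop:intersection-b-classes} converts each $\phi$-integral via the identity $\int \phi^s(\cdot) = \frac{g!}{(g-s)!}(a_1/n)^s \int(\cdot)$, valid for $s \le \min\{d,g\}$ and zero otherwise. The hypothesis forces $d\ge g$, so the sum truncates at $s = g$ and collapses by the binomial theorem into $\bigl((n-L)/n\bigr)^g \int a_1^N P$; regrouping the prefactors yields the claimed formula.

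The main obstacle I anticipate is the bookkeeping required to extend the obstruction theory construction of \cite{ckm} to the direct sum setting and to verify that the virtual compatibility identity distributes correctly over the summands. The decisive point that makes the closed form emerge --- namely that $\prod_i e^{-\ell_i\phi/a_1} = e^{-L\phi/a_1}$, so only the total $L = \sum_i \ell_i$ enters the $\phi$-dependent collapse --- is purely formal, and once it is in place the rest of the argument is an exact parallel of the single hypersurface computation.
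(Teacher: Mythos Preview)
Your proposal is correct and follows essentially the same route as the paper: extend the virtual class compatibility to the direct sum bundle $\BE_{\underline{\ell}}$, compute its top Chern class as $\prod_i (\ell_i a_1)^{d\ell_i-g+1}e^{-\ell_i\phi/a_1}$ via Lemma~\ref{lem:Euler_class_calculation} summand by summand, combine the exponentials into $e^{-L\phi/a_1}$ with $L=\sum_i\ell_i$, and then expand and collapse using \eqref{eq:phi_intersection} and the binomial theorem. The paper's own argument is exactly this, with the same observation that $d\ell_i>2g-2$ forces $d\ge g$ so that the sum runs over all $0\le s\le g$.
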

\noindent We note the following consequence of Theorems \ref{thm:enumerative} and \ref{Thm:VI_formula_complete_intersection}.
 \begin{corollary}
    Let $X_{\underline{\ell}}\subset \BP^r$ be a complete intersection such that $\mor_d(C,X_{\underline{\ell}})$ has expected dimension for $d$ sufficiently large. There are exactly $$\prod_{i=1}^{u}\ell_i^{d\ell_i - g + 1} \left(r + 1 - \sum_{i=1}^{u}\ell_i\right)^g$$ maps from $f:C\to\BP^r$ such that $f(C)\subset X_{\underline{\ell}}$ and $f$ sends distinct points $p_1,\dots,p_t$ to codimension $i_k$ planes $H_{i_k}\subset \BP^r$ in general position for $1\le k\le t$ . Here we assume $\sum_{k=1}^{t} i_k = e_{\underline{\ell}}$ and  $1 \le i_k \le r - \sum_{i=1}^{u}\ell_i$ for all $1\le k\le t$.
\end{corollary}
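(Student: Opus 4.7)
The plan is to mirror the proof of Corollary~\ref{cor:VI_hypersurface_projective_space}, replacing the hypersurface $X_\ell$ throughout by the complete intersection $X_{\underline{\ell}}$. First I would view $X_{\underline{\ell}}$ inside $\BP^r=G(r,r+1)$, so that a codimension $i_k$ Schubert cycle on $\BP^r$ is represented by $c_{i_k}(S^\vee)=a_{i_k}$; the left-hand side of the count is then the virtual integral $\int_{[\quot_d(C,X_{\underline\ell})]^{vir}}\prod_{k=1}^{t}a_{i_k}$, which is what we need to evaluate. To reduce the monomial to a single Chern class, I would pass via the duality swap $G(r,r+1)\leftrightarrow G(1,r+1)$ to the dual incarnation $\tilde X_{\underline\ell}\subset \BP^r=G(1,r+1)$, on whose Quot scheme the universal subsheaf $\tilde\CE_p$ is a line bundle, and hence $s_{i_k}(\tilde\CE_p)=c_1(\tilde\CE_p^\vee)^{i_k}=\tilde a_1^{i_k}$.

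The first technical step is to extend Proposition~\ref{prop:Duality} to the complete intersection setting, giving
\[
\int_{[\quot_d(C,X_{\underline\ell})]^{vir}}\prod_{k=1}^{t}c_{i_k}(\CE_p^\vee)
=\int_{[\quot_d(C,\tilde X_{\underline\ell})]^{vir}}\prod_{k=1}^{t}s_{i_k}(\tilde\CE_p).
\]
This should be essentially automatic from Theorem~\ref{Thm:VI_formula_complete_intersection}, since its right-hand side splits as a prefactor depending only on $n$, $\underline\ell$, $g$, $d$ (and not on $r$) times an integral of a universal polynomial in the $a_i$'s over $[\quot_d(C,G(r,n))]^{vir}$; the argument given for the hypersurface case then carries over verbatim, either invoking asymptotic enumerativity for large $d$ as in \cite{bertram}, or by substituting the root-of-unity identities $e_i(\zeta_1,\dots,\zeta_r)=h_i(-\zeta_{r+1},\dots,-\zeta_n)$ and $J=(-1)^{r(n-r)}\tilde J$ into the Vafa--Intriligator sum.

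Applying Theorem~\ref{Thm:VI_formula_complete_intersection} to $\tilde X_{\underline\ell}\subset G(1,r+1)$ with the monomial $P=\tilde a_1^{\,e_{\underline\ell}}$ (which is legitimate because $\sum_k i_k=e_{\underline\ell}$), I get
\[
\int_{[\quot_d(C,\tilde X_{\underline\ell})]^{vir}}\tilde a_1^{\,e_{\underline\ell}}
=\frac{\bigl(r+1-\sum_i\ell_i\bigr)^g\prod_i\ell_i^{\,d\ell_i-g+1}}{(r+1)^g}\int_{[\quot_d(C,\BP^r)]^{vir}}\tilde a_1^{\,e}.
\]
A direct evaluation of the right integral by the Vafa--Intriligator formula on $G(1,r+1)$, where the symmetric polynomials collapse to monomials in a single variable and the sum $\sum_\zeta \zeta^{dn}$ over $(r+1)$-th roots of unity equals $r+1$, yields $(r+1)^g$; the prefactor cancels the $(r+1)^g$ denominator and leaves exactly $\prod_i\ell_i^{\,d\ell_i-g+1}\bigl(r+1-\sum_i\ell_i\bigr)^g$.

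Finally, I would invoke the complete-intersection part of Theorem~\ref{thm:enumerative} to upgrade this virtual number to an honest enumerative count: the hypothesis $\mor_d(C,X_{\underline\ell})$ has expected dimension for $d$ large is exactly what that theorem requires, and the codimension bound $i_k\le r-\sum_i\ell_i$ matches the bound $i_k<n-\sum_j\ell_j$ with $n=r+1$. The main obstacle, as flagged above, is the extension of the duality proposition to complete intersections; once this bookkeeping is done, the remaining computations are routine specializations of the formulas already established.
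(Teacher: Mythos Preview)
Your proposal is correct and follows exactly the route the paper takes: the paper states this corollary as an immediate consequence of Theorems~\ref{thm:enumerative} and~\ref{Thm:VI_formula_complete_intersection}, and the implicit argument is precisely the complete-intersection analogue of the proof of Corollary~\ref{cor:VI_hypersurface_projective_space} that you have written out, including the duality step via Proposition~\ref{prop:Duality} (whose extension to $X_{\underline\ell}$ is, as you note, automatic from the $r$-independence of the prefactor in Theorem~\ref{Thm:VI_formula_complete_intersection}).
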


\vskip.3in

\section{Asymptotic enumerativity}\label{sec:Asymptotic _Enumerativity}

\vskip.2in

We take up now the question of the enumerativity of the invariants, proving Theorem~\ref{thm:enumerative}. We recall first of all that for sufficiently large degrees, the Quot scheme $\quot_d (C, G(r,n))$ has long been known \cite{bdw, pr} to be irreducible of the expected dimension $nd - r (n-r) (g-1).$ Furthermore, in this situation, it has been known \cite{bertram} that the intersection of a top monomial in the $a$ classes is enumerative, counting the number of maps sending distinct points of the domain curve to general representatives of special Schubert cycles as prescribed by the monomial.

The enumerative interpretation of the virtual count of maps to a general hypersurface $X_\ell \subset G(r,n)$ in \eqref{eqn:VI_hypersurface} is conditional on the space of maps to $X_\ell$ having expected dimension, which is the case when $X_\ell$ is known to be weakly $g$-convex, as discussed in Remark~\ref{rem:weak_g-convexity} of the introduction. Establishing Conjecture \ref{conjecture:expected dim} would greatly enlarge the class of targets for which this count of maps is enumerative.  

To start the proof of Theorem \ref{thm:enumerative}, we assume therefore that the morphism space $\mor_d (C, X_\ell)$ has the expected dimension $$e_\ell = (n- \ell) d - (r(n-r) - 1) (g-1),\quad \text{for all} \ d\ge d_0.$$  The theorem is obtained by a variation of the argument of \cite{bertram}, as follows. 

\vskip.1in

Let $B_m \subset \quot_d (C, G(r,n))$ denote the boundary stratum parametrizing short exact sequences whose quotients have a torsion subsheaf of degree $m$. For such sequences, the subsheaf factors as 
$$0 \to E \to E_m \to \CO^n,$$ where $E_m/E$ is torsion of degree $m$ and $E_m$ is a subbundle of $\CO^n.$ We note the surjective morphisms
$$\tau_m: B_m \to \mor_{d-m} (C, G(r, n)), \, \  \{E\subset \CO^n \} \to \{E_m \subset \CO^n\},$$ 
$${\tilde{\tau}}_m: B_m \to C^{(m)} \times \mor_{d-m} (C, G(r, n)), \, \  \{E\subset \CO^n \} \to (\text{supp} \, E/E_m, \, \, E_m \subset \CO^n).$$ 
There is a universal sequence
$$0 \to \CE \to \CE_m \to \CO^n \, \, \text{on} \, \, B_m \times C,$$ and a corresponding injective map of line bundles 
$$(\det \CE_m^\vee)^\ell \to (\det \CE^\vee)^\ell \, \, \text{on} \, \, B_m \times C.$$
By pushing forward under $\pi$, we obtain the morphism
\begin{equation}
\mathbb E_{\ell, m} = R^0 \pi_* (\det \CE_m^\vee)^\ell \to \mathbb E_\ell.
\end{equation}

We assume first that $d-m \geq d_0$ and $d-m > 2(g-1).$ In this case, $\mathbb E_{\ell, m}$ is locally free and also a subbundle of $\mathbb E_\ell$. Therefore,
the section $\tilde{s}$ of \eqref{eqn:tildes} factors as
\begin{equation}
\CO \xrightarrow{\tilde{s}} \mathbb E_{\ell, m} \to \mathbb E_\ell,
\end{equation}
and we have that 
\begin{equation}
\label{eqn:b1}
\quot_d (C, X_\ell) \cap B_m \, \subset \, {\tau}_m^{-1} \left (\mor_{d-m} (C, X_\ell) \right ). 
\end{equation}

For $k$ with $1\leq k \leq t$, indexing the factors in the monomial $\prod_{k=1}^t a_{i_k},$ pick distinct points $p_k\in C$, and general special Schubert varieties $Y_k \subset G(r, n)$ representing $c_{i_k} (S^\vee).$ Importantly, as explained in 
\cite{bertram} and also used in \cite{alina}, for each $k$ there is a codimension $i_k$ subscheme $$D_{k} \subset \quot_d (C, G (r, n))$$ associated to $p_k, Y_k$ such that 
$$[D_{k}] = c_{i_k} (\CE_{p_k}^\vee),$$
and 
$$D_{k} \cap \mor_d (C, G(r,n)) = \text{ev}_{p_k}^{-1} (Y_k).$$
Here $\text{ev}_p: \mor_d (C, G(r, n)) \to G(r,n)$ is the evaluation map at $p.$ The subscheme $D_{k}$ is in fact a degeneracy locus representative of the universal Chern class $c_{i_k} (\CE_{p_k}^\vee)$ constructed using the universal morphism ${\CO^n}^\vee \to \CE_{p_k}^\vee$ on $\quot_d (C, G(r, n)).$ As explained in \cite{bertram}, we further have 
\begin{equation}
\label{eqn:b2}
D_{k} \cap B_m \, \subset \, {\tilde{\tau}}_m^{-1} \left ( p_k \times C^{(m-1)} \times \mor_{d-m} (C, G(r, n)) \right ) \cup {\tau}_m^{-1} (D_{m, k}).
\end{equation}
Here we have let $$D_{m, k} = \text{ev}_{p_k}^{-1} (Y_k) \subset \mor_{d-m} (C, G(r, n)),$$ the corresponding degeneracy locus in $\mor_{d-m} (C, G(r, n)).$
Combining \eqref{eqn:b1} and \eqref{eqn:b2}, we find 
\begin{equation}
\quot_d (C, X_\ell) \cap_{
k=1}^{t} D_{k} \cap B_m \, \, \subset \, \, \cup_{I} {\tau}_m^{-1} \left (\mor_{d -m} (C, X_\ell) \cap_{k \in I} D_{m, k} \right ),
\end{equation}
where we let $I$ index subsets of $\{1, \dots, t \}$ of cardinality $t-m.$ As 
$$i_1 + \dots + i_t = d (n -\ell)  - (r(n-r) -1 ) (g-1)\, \, \text{and} \, \, i_k \leq n -\ell -1,$$ we have 
\begin{equation}
\label{eqn:codimbound}
\text{codim} \,  \cap_{k \in I} D_{m, k} \geq d (n -\ell)  - (r(n-r) -1 ) (g-1) - m (n -\ell -1),
\end{equation}
for each $I$. The right-hand side is strictly larger than the dimension of the scheme $\mor_{d -m} (C, X_\ell)$ which is assumed expected,
$$\dim \mor_{d -m} (C, X_\ell) = (d-m) (n-\ell) - (r(n-r) -1 ) (g-1).$$ Thus the intersection $$\quot_d (C, X_\ell) \cap_{
k=1}^{t} D_{k} \cap B_m$$ is empty.

Finally, let us consider the case when $m$ is sufficiently large so that $d-m < d_0$, where $d_0$ is as before the irreducibility threshold for $\mor_d (C, X_\ell).$ The actual dimensions of the schemes $\mor_{d-m} (C, G(r, n))$ are in this case bounded. The codimension \eqref{eqn:codimbound} of the intersection $\cap_{k \in I} D_{m, k}$ is nevertheless bounded below by an expression increasing linearly with $d$. For $d$ sufficiently large therefore, this intersection is empty in $\mor_{d-m} (C, X_\ell).$

The theorem is argued similarly when $X_{\underline \ell}$ is a complete intersection in $G(r, n),$ as long as the factors $a_{i_k}$ of the integrand in Theorem~\ref{Thm:VI_formula_complete_intersection} satisfy the codimension condition $$i_k < n - \sum_{j=1}^{u} \ell_j, \, \, \text{for} \, \, 1\leq k \leq t.$$

\vskip.2in

\end{document}